\documentclass[reqno]{amsart}
\usepackage{amssymb, amsmath, amsthm, amscd, mathrsfs}

\usepackage{paralist}
\usepackage{cite}
\usepackage{bigints}
\usepackage{dsfont}
\usepackage{empheq}
\usepackage{amssymb}
\usepackage{cases}
\usepackage{enumitem}
\usepackage{color}
\usepackage{cancel}
\allowdisplaybreaks

\usepackage{algorithm}

\usepackage{hyperref}
\numberwithin{equation}{section}

\theoremstyle{plain}
\newtheorem{theorem}{Theorem}[section]
\newtheorem{corollary}{Corollary}
\newtheorem{lemma}[theorem]{Lemma}
\newtheorem{proposition}{Proposition}
\theoremstyle{definition}
\newtheorem{definition}[theorem]{Definition}
\newtheorem{remark}{Remark}

\def \d {\mathrm{d}}

\title[The biharmonic heat with dynamic boundary conditions
] 
{
The Biharmonic Heat Equation with General Dynamic Boundary Conditions}

\author{Salah-Eddine Chorfi}
\author{Fouad Et-tahri}
\author{Lahcen Maniar}

\address{S. E. Chorfi, L. Maniar, Faculty of Sciences Semlalia, Cadi Ayyad University, Laboratory of Mathematics, Modeling and Automatic Systems, B.P. 2390, Marrakesh, Morocco}
\address{L. Maniar, The UM6P-Vanguard Center, University Mohammed VI Polytechnic, Rabat Campus, Morocco}
\address{F. Et-tahri, Faculty of Sciences-Agadir, Lab-SIV, Ibn Zohr University, B.P. 8106, Agadir, Morocco}

\email{s.chorfi@uca.ac.ma, maniar@uca.ac.ma}
\email{fouad.et-tahri@edu.uiz.ac.ma}

\makeatletter 
\@namedef{subjclassname@2020}{%
	\textup{2020} Mathematics Subject Classification}
\makeatother

\subjclass[2020]{35K35, 35A15, 47D06, 35B65}
\keywords{Biharmonic heat equation, dynamic boundary condition, eventual positivity, eventual contractivity}

\begin{document}
	\begin{abstract}
		In this work, we initiate the study of the biharmonic heat equation in a spatial bounded domain subject to dynamic boundary conditions involving the bi-Laplace-Beltrami operator on the boundary. The boundary heat equation is coupled to the interior one via a normal derivative term. By combining the sesquilinear form method and semigroup theory, we establish substantial qualitative properties of the fourth-order parabolic equation; in particular, the self-adjointness of the associated operator, compactness of its resolvent, and further spectral properties. We also investigate the generation of a $C_0$-semigroup and analyze its main properties: analyticity, compactness, eventual positivity, and eventual $L^\infty$-contractivity.
	\end{abstract}
	\maketitle

	\section{Introduction and statement of the problem} \label{sec1}
    Fourth-order parabolic equations arise naturally in the modeling of various physical and applied phenomena. They appear, for instance, in the study of flexible structures, nonlinear elasticity, epitaxial thin-film growth, and image segmentation, etc; see, e.g., \cite{ES86,KSW03,GBS06} and the references therein.
    
	We study the biharmonic heat equation with dynamic boundary conditions focusing on its well-posedness and $L^2$-regularity of the solutions, as well as other qualitative properties. More precisely, let $T>0$ and let $\Omega \subset \mathbb{R}^N$ ($N\geq 2$) be an open bounded connected subset with boundary $\Gamma=\partial\Omega$ of class $C^4$. We denote
	$$\Omega_T =(0,T)\times \Omega \qquad \text{and} \qquad\Gamma_T =(0,T)\times \Gamma,$$
	and we consider the biharmonic heat equation with dynamic boundary conditions given by
	\begin{empheq}[left = \empheqlbrace]{alignat=2}
		\begin{aligned}
			&\partial_t y + d\Delta^2 y = f(t,x), &&\quad\text{in } \Omega_T , \\
			&\partial_t y_{\Gamma} + \delta\Delta^2_{\Gamma} y_{\Gamma} - d\partial_{\nu} (\Delta y)= f_\Gamma(t,x), &&\quad\text{on } \Gamma_T, \\
			&\kappa\partial_{\nu}(\Delta y)=y- y_{\Gamma},   &&\quad\text{on } \Gamma_T, \\
			&\partial_{\nu}y=0, &&\quad\text{on } \Gamma_T, \\
			&(y(0,\cdot),y_{\Gamma}(0,\cdot))=(y_0, y_{0,\Gamma}),   &&\quad\text{in }\Omega\times\Gamma, \label{eq1to4}
		\end{aligned}
	\end{empheq}
	where the initial states are denoted by $(y_0, y_{0,\Gamma})\in L^2(\Omega)\times L^2(\Gamma)$, while the source terms are $f\in L^2(\Omega_T)$ and $f_\Gamma\in L^2(\Gamma_T)$. The constants $d> 0$, $\delta>0$ denote the diffusivities, and $\kappa>0$ is a given constant. We denote by $\partial_{\nu} y :=(\nabla y\cdot \nu) _{|\Gamma}$  the normal derivative of $y$, where $\nu$ represents the outer unit normal field on $\Gamma$. The notation $\Delta^2$ stands for the bi-Laplacian operator with respect to the space variable in $\Omega$. Similarly, $\Delta_\Gamma^2$ stands for the bi-Laplace-Beltrami operator, where $\Delta_\Gamma$ is the Laplace-Beltrami operator on $\Gamma,$ regarded as a compact Riemannian submanifold, (see Section \ref{sec2} for more details).
	
	We emphasize that equation \eqref{eq1to4} is a fourth-order parabolic equation in both $\Omega$ and $\Gamma$. Therefore, we naturally impose a second boundary condition that we chose as a Neumann boundary condition $\partial_{\nu} y=0$ on $\Gamma$ for simplicity. Note that we could instead consider a Dirichlet boundary condition $\Delta y=0$ on $\Gamma$, or even mixed boundary conditions $\Delta y =0$ on $\Gamma_1$ and $\partial_\nu y=0$ on $\Gamma_2$, where $\Gamma_1$ and $\Gamma_2$ are open subsets of $\Gamma$ such that $\Gamma=\overline{\Gamma_1} \cup \overline{\Gamma_2} \text{ and } \Gamma_1 \cap \Gamma_2=\varnothing$. Notice that the boundary condition $\eqref{eq1to4}_2$ is dynamic since it involves the time derivative of the state on the boundary. The condition $\eqref{eq1to4}_3$ is a Robin-type coupling, which reduces to the side condition $y_{\Gamma}=y\rvert_\Gamma$ as $\kappa\to 0$. Considering our general methodology, it should also be pointed out that our results hold for general elliptic fourth-order operators with lower-order terms.
	
    The literature has given considerable attention to fourth-order parabolic equations. Let us start by briefly reviewing the case of fourth-order equations with classical boundary conditions.
	\subsection*{Static boundary conditions}
	Fourth-order parabolic equations exhibit some remarkable properties that are different from their second-order counterparts, such as lack of maximum principle and lack of positivity preserving \cite{Be'87, FGG'08, GG'08}. In this context, it is worth mentioning the recent paper \cite{GMO'21} that studies the positivity of solutions to the linear and semilinear biharmonic heat equations in $\mathbb R^N$ (in which most of the works have been carried out). Regarding higher-order linear parabolic equations, we refer to the paper \cite{BG'13}, while we refer to the monograph \cite{GGS'10} for nonlinear elliptic equations in bounded domains.
	
	\subsection*{Dynamic boundary conditions}
Dynamic boundary conditions appear in many branches of applied mathematics. They model phenomena of bulk–surface (interface) interactions in heat transfer and fluid mechanics \cite{ACM'1, ACM'2, La'32, MMS'17}, in the biology of cells \cite{ACM'3, EFPT'18}, population dynamics \cite{FH'11}, models of reaction-diffusion \cite{ACMO'4, EC'25}, etc. Second-order parabolic equations with dynamic boundary conditions have been extensively studied in recent decades. To better understand the physical derivation of such boundary conditions, we refer to \cite{Go'06}. For a more recent derivation via the Carslaw-Jaeger constitutive law, we refer the reader to the paper \cite{Sa'20}.
	
	Nevertheless, only a few works investigate fourth-order equations subject to boundary conditions of dynamic type. We are only aware of the following papers in this context: in the smooth setting of a bounded domain with a $C^4$ boundary, the paper \cite{FGGR'08} gives first results for the bi-Laplacian with general Wentzell boundary conditions such as essential self-adjointness and boundedness from below. Recently, in \cite{GGGR'20} the authors have considered the fourth-order Wentzell heat equation in a uniformly regular domain of class $C^{2+\varepsilon}$ whose boundary is also a uniformly regular domain of class $C^{2+\varepsilon}$. In the less regular setting of the Lipschitz boundary, the recent paper \cite{DKP'21} investigates the main properties of the operator such as the characterization of the domain, the generation of the semigroup, its analyticity, asymptotic behavior, and eventual positivity. Note that this kind of equation is closely related to linear versions of the Cahn–Hilliard equation with dynamic boundary conditions that have been studied in \cite{Gal'07, MZ'05, RZ'03}.
	
	Unlike the works mentioned above, our system \eqref{eq1to4} incorporates a surface diffusion term given by the bi-Laplace-Beltrami operator, which has not been considered before in the literature. This allows us to extend some previous models for the biharmonic heat equation in $\Omega$ to cover bulk-surface interactions by including their boundary counterparts in $\Gamma$.
	
	The remainder of the work is organized as follows. In Section \ref{sec2}, we recall some preliminaries that will be useful for easily understanding the subsequent sections. In Section \ref{sec3}, we prove the well-posedness of system \eqref{eq1to4} and the regularity of its solution. We will use the method of sesquilinear forms to prove the main properties of the associated operator and the generated semigroup such as self-adjointness, analyticity, compactness, asymptotic behavior, etc. Finally, in Section \ref{sec4}, we study the eventual positivity and eventual $L^\infty$-contractivity of the semigroup.
	
	\section{Preliminaries} \label{sec2}
	We start by gathering some basic results that will be used throughout the sequel.
		\subsection*{Notation}
        We denote by $D(\Omega)$ the space of smooth functions with compact support in $\Omega$, i.e., $D(\Omega) = \mathcal{C}_c^\infty(\Omega)$.
        
		For all $k\in\mathbb{N}$, $H^{k}(\Omega)$ and $H^{k}(\Gamma)$ will denote the classical Sobolev spaces based on $L^{2}(\Omega)$ and $L^{2}(\Gamma)$, respectively. For any $u\in C^{k}(\overline{\Omega})$ and any $j=0,\cdots, k-1$ we define the traces
		$$\gamma_{j}u=\frac{\partial^{j}u}{\partial\nu^{j}}.$$
		These linear operators are extended continuously to the
		larger space $H^{k}(\Omega)$, see, e.g., \cite[Theorem 8.3]{LM'72}. We set 
		$$H^{k-j-1/2}(\Gamma):=\gamma_{j}(H^{k}(\Omega)),\qquad j=0,\cdots, k-1.$$
		In the sequel, we will simply write $u$  and $\partial_\nu u$ instead of $\gamma_{0}u$ and $\gamma_{1}u$, respectively.
        
	\subsection{Geometric setup and integration by parts}
	We will often use the so-called Green's formula in $\Omega$:
	$$ \int_{\Omega} \Delta u\, v \,\d x=-\int_{\Omega} \nabla u \cdot \nabla v \,\d x + \int_{\Gamma} \partial_{\nu} u\, v \,\d S, \qquad u \in H^{2}(\Omega), \; v \in H^{1}(\Omega).$$
	The boundary $\Gamma$ is a compact Riemannian manifold of dimension $(N-1)$ without boundary. We refer to \cite{Ta'11} for more details. Let $g$ be the Riemannian metric on $\Gamma$ induced by the natural embedding $\Gamma \hookrightarrow \mathbb{R}^N$. We fix a coordinate system $x=(x^j)$ and denote by $\displaystyle \left(\frac{\partial}{\partial x^j}\right)$ the corresponding tangent vector field. In local coordinates, $g$ is given by $g_{ij}:=\left\langle \dfrac{\partial }{\partial x^i}, \dfrac{\partial }{\partial x^j}\right\rangle$. We define the tangential gradient locally for any smooth function $y$ on $\Gamma$ by
	$$\nabla_\Gamma y :=\sum_{i,j=1}^{N-1} g^{ij} \frac{\partial y}{\partial x^j} \frac{\partial }{\partial x^i},$$
	where we denote $g=(g_{ij})$, $(g^{ij})$ its inverse and $|g|=\det(g_{ij})$. It is well known that $\nabla_\Gamma y$ is the projection of the standard Euclidean gradient $\nabla y$ onto the tangent space on $\Gamma$, that is,
	\begin{equation}
		\nabla_\Gamma y = \nabla y -(\partial_{\nu} y) \nu. \label{eqtgrad}
	\end{equation}
	The Laplace-Beltrami operator on $\Gamma$ with respect to the induced
	metric $g$ is given by
	\begin{equation}\label{eqlb}
		\Delta_\Gamma =\frac{1}{\sqrt{|g|}} \sum_{i,j=1}^{N-1} \frac{\partial}{\partial x^i} \left(\sqrt{|g|}\, g^{ij} \frac{\partial}{\partial x^j}\right).
	\end{equation}
	Since $\Gamma$ is a compact Riemannian manifold without boundary, the following divergence formula holds:
	\begin{equation}\label{sdt}
		\int_\Gamma \Delta_\Gamma y \, z \,\d S =- \int_\Gamma \langle \nabla_\Gamma y, \nabla_\Gamma z \rangle_\Gamma \,\d S, \qquad y\in H^2(\Gamma),\; z\in H^1(\Gamma),
	\end{equation}
	which is analog to Green's formula. 
	We can see that formula \eqref{sdt} makes sense for 
		$y, z\in H^1(\Gamma)$, where the integral in the left-hand side has to be interpreted in the distributional sense, as $\Delta_{\Gamma}y\in H^{-1}(\Gamma)$.
		Clearly $\Delta_{\Gamma}$ is not injective, but by \eqref{sdt} we have
		$$ \int_\Gamma (-\Delta_\Gamma y+y) \, y \,\d S =\|y\|^{2}_{H^{1}(\Gamma)}.$$
		Consequently, the operator $-\Delta_\Gamma +1$ defined by  $(-\Delta_\Gamma +1)y=-\Delta_\Gamma y+y$ is a topological and algebraic isomorphism from $H^1(\Gamma)$ into $H^{-1}(\Gamma)$. Moreover, by the elliptic regularity (see \cite[p.~361]{Ta'11}), $(-\Delta_\Gamma +1)^{-1}$ is bounded from $H^{k-1}(\Gamma)$ into $H^{k+1}(\Gamma)$ ($k\in \mathbb{N}\cup \{0\}$). Therefore, 
		\begin{equation}\label{iso}
			-\Delta_\Gamma +1: H^{k+1}(\Gamma) \rightarrow H^{k-1}(\Gamma)\quad \text{is an isomorphism}.
	      \end{equation}
    
	\begin{remark}
		In the one-dimensional case $N=1$, $\Gamma$ is a manifold of dimension $0$. Consequently, the tangential operators are trivial, i.e., $\nabla_{\Gamma}=0$ and $\Delta_{\Gamma}=0$, which motivates the assumption $N\ge 2$.
	\end{remark}

	\subsection{Functional setting}
	We introduce the functional setting for our system. We denote the $N$-dimensional Lebesgue measure on $\Omega$ by $\d x$ (or $|\cdot|$) and the $(N-1)$-dimensional surface measure on $\Gamma$ by $\d S$ (or simply $|\cdot|_\Gamma$).
	Then we consider the real Hilbert space
	$$\mathbb{L}^2:=L^2(\Omega, \d x)\times L^2(\Gamma, \d S)$$
	that represents the state space, endowed with the inner product given by
	$$\langle (y,y_\Gamma),(z,z_\Gamma)\rangle_{\mathbb{L}^2} =\langle y,z\rangle_{L^2(\Omega)} +\langle y_\Gamma,z_\Gamma\rangle_{L^2(\Gamma)}.$$
	We set $\mathds{1}:=(1,1)$ and $\mathbf{0}:=(0,0)$, which are constant functions in $\mathbb{L}^2$.
	
		We also define the space
		$$\mathbb{L}^\infty:=L^\infty(\Omega, \d x)\times L^\infty(\Gamma, \d S).$$
	
		The space $\mathbb{L}^2$ can be identified with the space $L^2\left(\overline{\Omega}, \d \mu\right)$, where the measure $\mu$ on $\overline{\Omega}$ is defined for every measurable subset $E\subset \overline{\Omega}$ by
		$$\mu(E):=|\Omega\cap E|+|\Gamma\cap E|_\Gamma.$$
		The space $\mathbb{L}^\infty$ can also be identified with the space $L^\infty\left(\overline{\Omega}, \d \mu\right)$, since $\mu$ is a Radon measure on the Borel $\sigma$-algebra $\mathscr{B}\left(\overline{\Omega}\right)$.
	
	Next, we introduce the spaces 
	$$\mathcal{H}^m:=H^m(\Omega)\times H^m(\Gamma), \quad m\in\mathbb{N},$$ which are Hilbert spaces endowed with the standard inner products
	\begin{align*}\left\langle\left(y, y_{\Gamma}\right),\left(z, z_{\Gamma}\right)\right\rangle_{\mathcal{H}^{m}}&=\left\langle y, z\right\rangle_{H^{m}(\Omega)}+\left\langle  y_{\Gamma},z_{\Gamma}\right\rangle_{H^{m}(\Gamma)}.
	\end{align*}
	
	\subsection{Miscellaneous facts on eventual positivity}
	We briefly recall some useful facts that will be used later. We refer, e.g., to \cite{GM'20} and \cite{DG18} for relevant results.
	
	Let $X$ be a finite measure space. Then the space $L^2(X)$ is a Hilbert lattice. Let $(\mathcal{T}(t))_{t \geq 0}$ be a real $C_0$-semigroup on $L^2(X)$ whose generator $A$ is self-adjoint with compact resolvent.
	
	\begin{definition} We adopt the following notions: 
		\begin{itemize}
			\item[$\bullet$] The semigroup $(\mathcal{T}(t))_{t \geq 0}$ is called \textit{uniformly eventually positive} if
			\begin{equation}\label{ep}
				\exists t_0 \ge 0,\; \forall f \in L^2(X),\quad f \geq 0 \Longrightarrow \mathcal{T}(t) f \ge 0 \quad \forall t \geq t_0.
			\end{equation}
			\item[$\bullet$] The semigroup $(\mathcal{T}(t))_{t \geq 0}$ is called \textit{positive} if \eqref{ep} holds for $t_0=0$.
			\item[$\bullet$] The semigroup $(\mathcal{T}(t))_{t \geq 0}$ is called \textit{uniformly eventually} $L^\infty$-\textit{contractive} if
			\begin{equation}\label{elc}
				\exists t_0 \ge 0,\; \forall f \in L^2(X),\; f \in L^\infty(X) \Longrightarrow \|\mathcal{T}(t) f\|_{L^\infty(X)} \le \|f\|_{L^\infty(X)} \quad \forall t \geq t_0.
			\end{equation}
			\item[$\bullet$] The semigroup $(\mathcal{T}(t))_{t \geq 0}$ is called $L^\infty$-\textit{contractive} if \eqref{elc} holds for $t_0=0$.
			\item[$\bullet$] The semigroup $(\mathcal{T}(t))_{t \geq 0}$ is called \textit{uniformly eventually sub-Markovian} if it is uniformly eventually positive and uniformly eventually $L^\infty$-contractive.
			\item [$\bullet$] The semigroup $(\mathcal{T}(t))_{t \geq 0}$ is called \textit{uniformly eventually Markovian} if it is uniformly eventually sub-Markovian and $\mathcal{T}(t)1=1$.
		\end{itemize}
	\end{definition}
	
	For any linear operator $B$, we denote by
	$$s(B):=\sup\{\mathrm{Re}\,\lambda \;\colon \lambda\in \sigma(B)\}$$
	its \textit{spectral bound}, and we set
	$$D(B^\infty):=\bigcap_{n \in \mathbb{N}} D\left(B^n\right).$$
	\begin{proposition}[see \cite{GM'20}]\label{subM}
		We assume that $s(-A)=0$ and the associated eigenspace $E_0$ be a one-dimensional space spanned by $1$. We further assume the continuous embedding $D(A^\infty)\hookrightarrow L^{\infty}(X)$. Then, the semigroup $(\mathcal{T}(t))_{t \geq 0}$ is uniformly eventually sub-Markovian.
	\end{proposition}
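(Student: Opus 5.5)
The plan is the standard spectral-decomposition argument. Since $A$ is self-adjoint with compact resolvent, $L^2(X)$ has an orthonormal basis of eigenvectors $(e_k)_{k\ge 0}$ of $-A$ with real eigenvalues $0=\lambda_0>\lambda_1\ge\lambda_2\ge\cdots\to-\infty$. Here $\lambda_0=s(-A)=0$, and the one-dimensionality of $E_0=\mathrm{span}\{1\}$ gives the strict gap $\lambda_1<0$. With $P f=\frac{1}{\mu(X)}\langle f,1\rangle 1$ the orthogonal projection onto $E_0$ (finite measure, so $1\in L^2(X)$), we get
\begin{equation*}
\mathcal{T}(t)f = Pf + R(t)f,\qquad \|R(t)\|_{\mathcal{L}(L^2)}\le e^{\lambda_1 t}.
\end{equation*}
The whole point is to upgrade this $L^2$ estimate to an $L^\infty$ estimate on $R(t)$.

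To do so, I use the embedding $D(A^\infty)\hookrightarrow L^\infty(X)$. $D(A^\infty)$ is a Fréchet space with seminorms $\|A^n\cdot\|$, so continuity gives some $n$ and $C$ with
\begin{equation*}
\|g\|_{L^\infty}\le C\sum_{j=0}^n\|A^jg\|_{L^2},\qquad g\in D(A^\infty).
\end{equation*}
For $t\ge 1$, analyticity/self-adjointness puts $R(t)f=\mathcal{T}(1)R(t-1)f$ in $D(A^\infty)$. Moreover $\|A^j\mathcal{T}(1)\|\le \sup_{\lambda\le 0}|\lambda|^je^{\lambda}<\infty$, and $R(t-1)$ has norm at most $e^{\lambda_1(t-1)}$. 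Hence
\begin{equation*}
\|R(t)f\|_{L^\infty}\le C' e^{\lambda_1 t}\|f\|_{L^2}\le C'\mu(X)^{1/2}e^{\lambda_1 t}\|f\|_{L^\infty}.
\end{equation*}
Since $\lambda_1<0$, this bound tends to $0$ uniformly in $f$.

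Eventual $L^\infty$-contractivity does not follow directly, because $\|Pf\|_\infty\le\|f\|_\infty$ alone gives only $\|\mathcal{T}(t)f\|_\infty\le \|f\|_\infty+\varepsilon\|f\|_\infty$. This is the main obstacle, and I handle it by deducing contractivity from positivity together with $\mathcal{T}(t)1=1$ (note $R(t)1=0$).

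\textbf{Positivity.} Take $f\ge0$, $f\ne 0$, in $L^\infty$. Then $Pf=\frac{\|f\|_{L^1}}{\mu(X)}1$. For uniformity one must compare $\|R(t)f\|_\infty$ with $\|f\|_{L^1}$ rather than $\|f\|_{L^\infty}$. This works by duality: $R(t)$ is self-adjoint, so $R(t)=R(t/2)^2$, and the estimate above for $R(t/2)$ gives $L^2\to L^\infty$ bounds. By duality $R(t/2)$ is also bounded $L^1\to L^2$ with the same constant $C'e^{\lambda_1t/2}$. Hence
\begin{equation*}
\|R(t)\|_{L^1\to L^\infty}\le C'^2e^{\lambda_1 t}.
\end{equation*}
Choose $t_0$ with $C'^2e^{\lambda_1t_0}\le 1/\mu(X)$. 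Then for $t\ge t_0$,
\begin{equation*}
\mathcal{T}(t)f\ge \frac{\|f\|_{L^1}}{\mu(X)}-C'^2e^{\lambda_1t}\|f\|_{L^1}\ge0.
\end{equation*}
Since $L^\infty$ is dense in $L^2$ and the positive cone is closed, this extends to all $0\le f\in L^2$, giving uniform eventual positivity.

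\textbf{Contractivity.} For real $f\in L^\infty$ and $t\ge t_0$, we have $\|f\|_\infty 1\pm f\ge0$, so $\mathcal{T}(t)(\|f\|_\infty1\pm f)\ge 0$. Since $\mathcal{T}(t)1=1$, this reads $|\mathcal{T}(t)f|\le\|f\|_\infty$. This is eventual $L^\infty$-contractivity, and combined with the positivity above it gives that the semigroup is uniformly eventually sub-Markovian (indeed Markovian).
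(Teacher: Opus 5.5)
Your argument is correct. It is also genuinely different from what the paper does, because the paper gives no proof here: it imports the statement from \cite{GM'20}, where it rests on general criteria for uniform eventual positivity of semigroups (cf.\ \cite{DG18}, which the paper cites alongside it).

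You instead prove the self-adjoint case directly, in four steps:
\begin{enumerate}
\item The Fr\'echet embedding, together with $\mathcal{T}(1)(L^2)\subset D(A^\infty)$, gives $\|R(t)\|_{L^2\to L^\infty}\le C'e^{\lambda_1 t}$.
\item Self-adjointness of $R(t)$ transfers this bound to $L^1\to L^2$.
\item The factorization $R(t)=R(t/2)^2$ then gives an $L^1\to L^\infty$ decay estimate. This is exactly what is needed to dominate $Pf=\mu(X)^{-1}\|f\|_{L^1}\mathds{1}$ with a $t_0$ independent of $f$.
\item Contractivity follows from positivity together with $\mathcal{T}(t)\mathds{1}=\mathds{1}$.
\end{enumerate}

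Your route is short and self-contained. It also yields an explicit $t_0$ in terms of the spectral gap, $\mu(X)$ and the embedding constant. The cited abstract route is more general: it covers non-self-adjoint generators, where the dual smoothing that you get for free from self-adjointness must be assumed separately, and it covers general Banach lattices with a quasi-interior point.

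A few small repairs are needed.
\begin{itemize}
\item Your $L^2\to L^\infty$ bound was only proved for $t\ge 1$. Applying it to $R(t/2)$ therefore needs $t\ge 2$, so you should also require $t_0\ge 2$.
\item The density step is unnecessary. Since $\mu(X)<\infty$, you have $L^2(X)\subset L^1(X)$, so your pointwise lower bound holds directly for every $0\le f\in L^2(X)$.
\item The identity $R(t)=R(t/2)^2$ does not come from self-adjointness. It comes from the semigroup law together with $P\mathcal{T}(t)=\mathcal{T}(t)P$ and $P^2=P$. Self-adjointness is what you use in the duality step.
\item Your reading $\mathcal{T}(t)=e^{-tA}$, i.e.\ that the generator has spectral bound $0$, is the consistent interpretation. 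The paper's wording (``generator $A$'' together with $s(-A)=0$) contains a sign slip.
\end{itemize}
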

	
	\section{Well-posedness and regularity of the solution} \label{sec3}
	In this section, we study the well-posedness and regularity of the system \eqref{eq1to4}. 
	This system can be written as follows
\begin{equation*}	
	\text{(ACP)}\quad \begin{cases}
		\partial_t Y=\mathcal{A} Y+ F, \quad 0<t<T, \nonumber\\
		Y(0)=Y_0, \nonumber
	\end{cases}
	\end{equation*}
	where $Y:=(y,y_{\Gamma})$, $F:=(f,f_\Gamma)$, $Y_0:=(y_0, y_{0,\Gamma})$ and the linear operator $$\mathcal{A} \colon D(\mathcal{A}) \subset \mathbb{L}^2 \longrightarrow \mathbb{L}^2$$
	is given by
	\begin{eqnarray}
		&&\mathcal{A}=\begin{pmatrix} -d\Delta^2\quad & 0\\ d\partial_\nu \Delta\quad & -\delta\Delta^2_\Gamma\end{pmatrix}
	\end{eqnarray} 
	with domain
	\begin{align*}
		&&D(\mathcal{A}):=
		\{(y,y_{\Gamma})\in \mathcal{H}^{4}\colon \partial_{\nu}y=0\quad\mbox{and}\quad \kappa\partial_{\nu}(\Delta y)=y-  y_{\Gamma}\}.
	\end{align*}
	In the following part, we mainly borrow our terminology from \cite{Ou'05}. Now we introduce the bilinear form given by
	\begin{align*}
		\mathfrak{a}[(y,y_\Gamma),(z,z_\Gamma)]&= d\int_\Omega \Delta y \Delta z \,\d x 
		+ \delta\int_\Gamma \Delta_\Gamma y_\Gamma \Delta_\Gamma z_\Gamma \,\d S\\
		& \quad +\frac{d}{\kappa}\int_{\Gamma}( y-  y_{\Gamma})( z-  z_{\Gamma})\d S,
	\end{align*}
	with domain 
	$$D(\mathfrak{a})=
		\{(y,y_{\Gamma})\in \mathcal{H}^{2}\colon \partial_{\nu}y=0 \},
	$$ 
	in the Hilbert space $\mathbb{L}^2$. The operator $\widetilde{\mathcal{A}}$ associated to the form $\mathfrak{a}$ is defined as follows
	\begin{equation}
	    \begin{aligned}
		D(\widetilde{\mathcal{A}})&:=\left\{(y,y_\Gamma) \in D(\mathfrak{a}), \text{ there exists } (f,f_\Gamma) \in \mathbb{L}^2 \text{ such that } \right.\\
		&\quad \left. \mathfrak{a}[(y,y_\Gamma),(z,z_\Gamma)]=\langle (f,f_\Gamma), (z,z_\Gamma)\rangle_{\mathbb{L}^2} \text{ for all } (z,z_\Gamma) \in D(\mathfrak{a}) \right\}, \\
		\widetilde{\mathcal{A}}(y,y_\Gamma)&:=(f,f_\Gamma) \qquad \text{ for all }(y,y_\Gamma) \in D(\widetilde{A}).
	\end{aligned}
	\end{equation}
    Next, we prove the main properties of the form $\mathfrak{a}.$
	\begin{lemma} \label{genform}
	The form $\mathfrak{a}$ is densely defined, symmetric, accretive, and closed.
	\end{lemma}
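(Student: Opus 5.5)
Symmetry and accretivity follow directly from the definition of $\mathfrak{a}$; density and closedness take more work. Symmetry holds because each of the three integrals in $\mathfrak{a}$ is symmetric in $(y,y_\Gamma)$ and $(z,z_\Gamma)$. Accretivity holds because
$$\mathfrak{a}[(y,y_\Gamma),(y,y_\Gamma)]= d\|\Delta y\|^2_{L^2(\Omega)}+\delta\|\Delta_\Gamma y_\Gamma\|^2_{L^2(\Gamma)}+\frac{d}{\kappa}\|y-y_\Gamma\|^2_{L^2(\Gamma)}\ge 0,$$
using $d,\delta,\kappa>0$.

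For density, I would show that $D(\mathfrak{a})$ contains $D(\Omega)\times C^\infty(\Gamma)$. A function with compact support in $\Omega$ has vanishing normal derivative on $\Gamma$, so this inclusion holds. Since $D(\Omega)$ is dense in $L^2(\Omega)$ and $C^\infty(\Gamma)$ is dense in $L^2(\Gamma)$, the product is dense in $\mathbb{L}^2$.

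For closedness, I would prove that the form norm
$$\|(y,y_\Gamma)\|_{\mathfrak{a}}^2:=\mathfrak{a}[(y,y_\Gamma),(y,y_\Gamma)]+\|(y,y_\Gamma)\|^2_{\mathbb{L}^2}$$
is equivalent on $D(\mathfrak{a})$ to the $\mathcal{H}^2$-norm, and that $D(\mathfrak{a})$ is a closed subspace of $\mathcal{H}^2$. Closedness of the subspace is immediate, since $D(\mathfrak{a})$ is the kernel of the continuous map $(y,y_\Gamma)\mapsto \partial_\nu y=\gamma_1 y$ from $\mathcal{H}^2$ to $H^{1/2}(\Gamma)$. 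The bound $\|\cdot\|_{\mathfrak{a}}\lesssim\|\cdot\|_{\mathcal{H}^2}$ follows from the trace theorem, which controls $\|y\|_{L^2(\Gamma)}$ by $\|y\|_{H^1(\Omega)}$.

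The reverse inequality is the main point. It requires two coercivity estimates.

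On the boundary, \eqref{iso} with $k=1$ shows that $-\Delta_\Gamma+1$ is an isomorphism from $H^2(\Gamma)$ onto $L^2(\Gamma)$. Hence
$$\|y_\Gamma\|_{H^2(\Gamma)}\le C\bigl(\|\Delta_\Gamma y_\Gamma\|_{L^2(\Gamma)}+\|y_\Gamma\|_{L^2(\Gamma)}\bigr).$$

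In the interior, I would invoke $H^2$-regularity for the Neumann problem on the $C^4$ domain $\Omega$: if $y\in H^2(\Omega)$ and $\partial_\nu y=0$, then
$$\|y\|_{H^2(\Omega)}\le C\bigl(\|\Delta y\|_{L^2(\Omega)}+\|y\|_{L^2(\Omega)}\bigr).$$
To justify it, set $g:=-\Delta y+y\in L^2(\Omega)$. Then $y$ is the weak solution of the Neumann problem $-\Delta w+w=g$ with $\partial_\nu w=0$. Elliptic regularity gives $\|y\|_{H^2(\Omega)}\le C\|g\|_{L^2(\Omega)}$, which yields the estimate.

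I expect this interior step to be the only delicate point. It is exactly here that the boundary condition $\partial_\nu y=0$ built into $D(\mathfrak{a})$ is used; without it, $\|\Delta y\|_{L^2(\Omega)}$ does not control the full $H^2(\Omega)$-norm.

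Combining the two estimates, and dropping the nonnegative coupling term, gives $\|(y,y_\Gamma)\|_{\mathcal{H}^2}\le C\|(y,y_\Gamma)\|_{\mathfrak{a}}$. Therefore $(D(\mathfrak{a}),\|\cdot\|_{\mathfrak{a}})$ is complete, being a closed subspace of the Hilbert space $\mathcal{H}^2$ with an equivalent norm. This is precisely closedness of the form.
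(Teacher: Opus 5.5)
Your proof is correct and follows the same route as the paper. Symmetry and accretivity come by inspection, and density comes from $D(\Omega)\times(\text{smooth functions on }\Gamma)\subset D(\mathfrak{a})$; the paper uses $D(\Omega)\times H^{2}(\Gamma)$, which also sidesteps the question of what $C^\infty(\Gamma)$ means on a $C^4$ boundary. Closedness comes from bounding the form norm below by the $\mathcal{H}^2$-norm, together with the fact that $D(\mathfrak{a})=\ker\gamma_1$ is closed in $\mathcal{H}^2$. Your version is more careful than the paper's. The paper calls \eqref{f1} obvious with constant $d\wedge\delta\wedge 1$, but $\|\Delta y\|_{L^2(\Omega)}^2+\|y\|_{L^2(\Omega)}^2$ and $\|\Delta_\Gamma y_\Gamma\|_{L^2(\Gamma)}^2+\|y_\Gamma\|_{L^2(\Gamma)}^2$ control the $H^2$-norms only up to an elliptic-regularity constant. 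That constant is exactly what your Neumann $H^2$-estimate (using $\partial_\nu y=0$) and the isomorphism \eqref{iso} with $k=1$ provide.
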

	\begin{proof}
	The form $\mathfrak{a}$ is symmetric and accretive, that is, for all $(y,y_\Gamma), (z,z_\Gamma)\in D(\mathfrak{a})$,
	$$\mathfrak{a}[(y,y_\Gamma),(z,z_\Gamma)]=\mathfrak{a}[(z,z_\Gamma),(y,y_\Gamma)] \qquad \text{ and } \qquad \mathfrak{a}[(y,y_\Gamma),(y,y_\Gamma)] \ge 0.$$
	We have $D(\Omega)\times H^{2}(\Gamma)\subset D(\mathfrak{a})$ is clearly dense in $\mathbb{L}^{2}$.
	 Now, let us show that $\mathfrak{a}$ is closed. It is obvious that 
	\begin{eqnarray}
		\|(y,y_\Gamma)\|^{2}_{D(\mathfrak{a})}&:=&\mathfrak{a}[(y,y_\Gamma),(y,y_\Gamma)]+\|(y,y_\Gamma)\|^{2}_{\mathbb{L}^{2}} \nonumber\\
		&\geq & d\|\Delta y\|^{2}_{L^{2}(\Omega)}	+\delta\|\Delta_{\Gamma} y_{\Gamma}\|^{2}_{L^{2}(\Gamma)}+\|(y,y_\Gamma)\|^{2}_{\mathbb{L}^{2}} \nonumber\\
		&\geq & (d\wedge \delta \wedge 1)\|(y,y_{\Gamma})\|^{2}_{\mathcal{H}^{2}}, \label{f1}
	\end{eqnarray}
    where $d\wedge \delta \wedge 1$ denotes the minimum of $d, \delta$ and $1$. The continuity of the trace and the normal derivative operators and estimate \eqref{f1} imply that $(D(\mathfrak{a}),\|\cdot\|_{D(\mathfrak{a})})$ is complete. This completes the proof.
\end{proof}

    We obtain first properties of the governing operator $\mathcal{A}$.
	\begin{theorem}\label{thm1} The following properties hold:
		\begin{enumerate}
			\item The operator $\mathcal{A}$ is self-adjoint and dissipative (nonpositive).
			\item The operator $\mathcal{A}$ generates a $C_0$-semigroup $\left(e^{t\mathcal{A}}\right)_{t\geq 0}$ on $\mathbb{L}^2$ that is self-adjoint, contractive and analytic of angle $\frac{\pi}{2}$.
		\end{enumerate} Moreover, the following interpolation identity holds
		$$[\mathbb L^2, D(\mathcal{A})]_{\frac{1}{2}}=D(\mathfrak{a}), $$
		where $[\cdot, \cdot]_{\frac{1}{2}}$ denotes the complex interpolation functor.
	\end{theorem}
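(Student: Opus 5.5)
The plan is to identify $\mathcal{A}$ with $-\widetilde{\mathcal{A}}$, where $\widetilde{\mathcal{A}}$ is the operator associated with the form $\mathfrak{a}$. Everything else then follows from the general form theory of \cite{Ou'05}, using Lemma \ref{genform}. Since $\mathfrak{a}$ is densely defined, symmetric, accretive and closed, $\widetilde{\mathcal{A}}$ is self-adjoint and nonnegative. Hence $-\widetilde{\mathcal{A}}$ generates a self-adjoint contractive $C_0$-semigroup, and by the spectral theorem it is analytic of angle $\frac{\pi}{2}$. The interpolation identity follows from Kato's square root property for symmetric forms: $D(\widetilde{\mathcal{A}}^{1/2})=D(\mathfrak{a})$ with equivalent norms. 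For a nonnegative self-adjoint operator, $[\mathbb{L}^2,D(\widetilde{\mathcal{A}})]_{1/2}=D(\widetilde{\mathcal{A}}^{1/2})$ by the functional calculus and bounded imaginary powers. So the whole theorem reduces to proving $\mathcal{A}=-\widetilde{\mathcal{A}}$.

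\emph{Inclusion $\mathcal{A}\subset-\widetilde{\mathcal{A}}$.} Take $(y,y_\Gamma)\in D(\mathcal{A})$ and $(z,z_\Gamma)\in D(\mathfrak{a})$. Integrate by parts twice with Green's formula, using $\partial_\nu y=\partial_\nu z=0$:
$$\int_\Omega \Delta^2 y\, z\,\d x=\int_\Omega \Delta y\,\Delta z\,\d x+\int_\Gamma \partial_\nu(\Delta y)\, z\,\d S .$$
The term $\int_\Gamma \Delta y\,\partial_\nu z\,\d S$ drops out. On $\Gamma$, apply \eqref{sdt} twice to get $\int_\Gamma \Delta_\Gamma^2 y_\Gamma z_\Gamma\,\d S=\int_\Gamma\Delta_\Gamma y_\Gamma\Delta_\Gamma z_\Gamma\,\d S$. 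Now substitute $d\,\partial_\nu\Delta y=\frac{d}{\kappa}(y-y_\Gamma)$. This gives
$$\langle -\mathcal{A}(y,y_\Gamma),(z,z_\Gamma)\rangle_{\mathbb{L}^2}=\mathfrak{a}[(y,y_\Gamma),(z,z_\Gamma)],$$
because the boundary terms combine into $\frac{d}{\kappa}\int_\Gamma (y-y_\Gamma)(z-z_\Gamma)\,\d S$.

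\emph{Reverse inclusion, and the main obstacle.} This is an elliptic regularity argument. Let $(y,y_\Gamma)\in D(\widetilde{\mathcal{A}})$ with $\widetilde{\mathcal{A}}(y,y_\Gamma)=(f,f_\Gamma)$. I would proceed in four steps.

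First, test with $(z,0)$, $z\in D(\Omega)$. This gives $d\Delta^2 y=f$ in the distributional sense.

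Second, test with $(0,z_\Gamma)$. This gives $\delta\Delta_\Gamma^2 y_\Gamma=f_\Gamma+\frac{d}{\kappa}(y-y_\Gamma)\in L^2(\Gamma)$. Applying the isomorphism \eqref{iso} twice (to $-\Delta_\Gamma+1$) yields $y_\Gamma\in H^4(\Gamma)$.

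Third, use $y_\Gamma\in H^4(\Gamma)$ to see that $y$ is a weak solution of
$$d\Delta^2 y=f \ \text{in } \Omega,\qquad \partial_\nu y=0,\qquad d\,\partial_\nu\Delta y=g:=\tfrac{d}{\kappa}(y-y_\Gamma)\ \text{on } \Gamma .$$
A priori $g\in H^{3/2}(\Gamma)$. In weak form the problem reads: for all $z\in H^2(\Omega)$ with $\partial_\nu z=0$, one has $d\int_\Omega\Delta y\Delta z\,\d x+\int_\Gamma g z\,\d S=\int_\Omega f z\,\d x$. This is the Neumann-type problem for $\Delta^2$, whose boundary operators $\partial_\nu$ and $\partial_\nu\Delta$ satisfy the Lopatinskii--Shapiro (complementing) condition. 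Regularity theory for such problems (Agmon--Douglis--Nirenberg, or \cite{LM'72}, or the approach of \cite{GGS'10}) with a $C^4$ boundary then gives $y\in H^4(\Omega)$, since the data lie in $L^2(\Omega)$ and $H^{1/2}(\Gamma)$.

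Fourth, integrate by parts back in the weak formulation. This recovers the natural boundary condition $\kappa\partial_\nu\Delta y=y-y_\Gamma$ and the identity $f_\Gamma=-\delta\Delta_\Gamma^2y_\Gamma+d\partial_\nu\Delta y$. Hence $(y,y_\Gamma)\in D(\mathcal{A})$ and $\mathcal{A}(y,y_\Gamma)=-(f,f_\Gamma)$.

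I expect the third step to be the delicate one: checking the complementing condition, and handling regularity of a weak solution whose test space carries the constraint $\partial_\nu z=0$. As a first attempt I would rather bypass the weak-to-strong issue. Solve the strong problem (after shifting $\Delta^2$ to $\Delta^2+1$ for unique solvability) to obtain some $\tilde y\in H^4(\Omega)$ with the same data. That $\tilde y$ is also a weak solution, and uniqueness of weak solutions via the coercive form then forces $y=\tilde y$.
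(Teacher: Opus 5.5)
Your proof is correct, and its skeleton coincides with the paper's. Both arguments identify $\mathcal{A}=-\widetilde{\mathcal{A}}$ and read off self-adjointness, contractivity and analyticity from the form. Both get the interpolation identity from Kato's square-root theorem and complex interpolation of domains. In the reverse inclusion, both test with $(z,0)$ and $(0,z_\Gamma)$ and use \eqref{iso} to get $y_\Gamma\in H^4(\Gamma)$.

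The genuine difference is the bulk regularity step you single out as delicate. The paper never treats $\Delta^2$ with the boundary pair $(\partial_\nu,\partial_\nu\Delta)$ as a fourth-order problem. Instead it proceeds in three moves:
\begin{enumerate}
\item It extracts the natural condition $\partial_\nu(\Delta y)=\frac{1}{\kappa}(y-y_\Gamma)$ from the weak formulation, using that the traces of $\{z\in H^2(\Omega):\partial_\nu z=0\}$ exhaust $H^{3/2}(\Gamma)$.
\item It applies second-order Neumann regularity to $w=\Delta y$, with $\Delta w=f/d\in L^2(\Omega)$ and $\partial_\nu w\in H^{1/2}(\Gamma)$, to get $\Delta y\in H^2(\Omega)$.
\item It applies Neumann regularity once more to $y$ itself, with $\Delta y\in H^2(\Omega)$ and $\partial_\nu y=0$, to get $y\in H^4(\Omega)$.
\end{enumerate}
This factorization $\Delta^2=\Delta\circ\Delta$ removes both the complementing-condition check and the weak-to-strong issue. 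The price is modest: $\partial_\nu(\Delta y)$ must first be read as a generalized trace, since only $\Delta y,\Delta^2 y\in L^2(\Omega)$ are known at that moment. One also needs Neumann regularity for a solution known a priori only in $L^2(\Omega)$.

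Your route is heavier, but it does not depend on the special structure of the boundary operators. It is therefore the one that would survive the extension to general fourth-order operators with lower-order terms mentioned in the introduction. Both ingredients you need are available:
\begin{itemize}
\item The Lopatinskii--Shapiro condition holds. On the half-space the decaying solutions $v=(a+bt)e^{-|\xi'|t}$ satisfy $v'(0)=b-|\xi'|a$ and $\partial_t(\partial_t^2-|\xi'|^2)v(0)=2|\xi'|^2 b$, so the two homogeneous conditions force $a=b=0$.
\item In your existence-plus-uniqueness bypass, the real work is solvability of the shifted strong problem in $H^4(\Omega)$. It follows from Fredholmness together with injectivity and formal self-adjointness, both of which come from the same Green identity.
\end{itemize}

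One small slip: in your fourth step the identity should read $f_\Gamma=\delta\Delta_\Gamma^2 y_\Gamma-d\,\partial_\nu\Delta y$, which is what your second step gives. It is this sign that yields $\mathcal{A}(y,y_\Gamma)=-(f,f_\Gamma)$.
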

	
	\begin{proof}
		Firstly, we prove that that $\mathcal{A}=-\widetilde{\mathcal{A}}$. Let $(y,y_\Gamma) \in D(\mathcal{A})$ and $(z,z_\Gamma)\in D(\mathfrak{a})$. Applying integration by parts and divergence formula \eqref{sdt} twice, we obtain
	\begin{align*}
			&\langle \mathcal{A}(y,y_\Gamma), (z,z_\Gamma)\rangle_{\mathbb{L}^2}  =-d\int_\Omega \Delta^2 y\, z \d x +d\int_\Gamma \partial_\nu (\Delta y) z_\Gamma \d S - \delta\int_\Gamma \Delta^2_\Gamma y_\Gamma\, z_\Gamma \d S\\
			&= d\int_\Omega \nabla\Delta y\, \nabla z \d x -d\int_\Gamma \partial_{\nu}(\Delta y)z\d S +d\int_\Gamma \partial_\nu (\Delta y) z_\Gamma \d S - \delta\int_\Gamma \Delta_\Gamma y_\Gamma\; \Delta_\Gamma z_\Gamma \d S\\
			&= -d\int_\Omega \Delta y\, \Delta z \d x +d\int_\Gamma \partial_{\nu}z \Delta y \d S-d\int_\Gamma \partial_{\nu}(\Delta y)z \d S +d\int_\Gamma \partial_\nu (\Delta y) z_\Gamma \d S \\
			&\quad - \delta\int_\Gamma \Delta_\Gamma y_\Gamma\; \Delta_\Gamma z_\Gamma \d S\\
			&= -d\int_\Omega \Delta y\, \Delta z \d x -d\int_\Gamma \partial_{\nu}(\Delta y)(z-z_{\Gamma}) \d S - \delta\int_\Gamma \Delta_\Gamma y_\Gamma\; \Delta_\Gamma z_\Gamma \d S \\
			&= -\mathfrak{a}[(y,y_\Gamma),(z,z_\Gamma)].
		\end{align*}
		Then, $D(\mathcal{A}) \subseteq D(\widetilde{\mathcal{A}})$ and $\mathcal{A}(y,y_\Gamma)=-\widetilde{\mathcal{A}}(y,y_\Gamma)$ for all $(y,y_\Gamma) \in D(\mathcal{A})$. Conversely, let
		$(y,y_{\Gamma})\in D(\widetilde{\mathcal{A}})$ and $(f,f_{\Gamma}):=\widetilde{\mathcal{A}}(y,y_{\Gamma})\in \mathbb{L}^{2}$. Then, for all $(z,z_{\Gamma})\in D(\mathfrak{a})$, we have
		\begin{align}
			& d\int_\Omega \Delta y \Delta z \,\d x +\frac{d}{\kappa}\int_{\Gamma}(y-  y_{\Gamma})(z- z_{\Gamma})\d S \nonumber\\
			& \quad + \delta\int_\Gamma \Delta_\Gamma y_\Gamma \Delta_\Gamma z_\Gamma \,\d S=\int_{\Omega} fz\d x + \int_{\Gamma} f_{\Gamma}z_{\Gamma}\d S.
			\label{w1}
		\end{align}
		Fixing $z=0$ in $\Omega$, for all $z_{\Gamma}\in 
		H^{2}(\Gamma)$, we deduce that
		\begin{equation*}
			\frac{d}{\kappa}\int_{\Gamma}(y-  y_{\Gamma})z_{\Gamma}\d S+ \delta\int_\Gamma \Delta_\Gamma y_\Gamma \Delta_\Gamma z_\Gamma \,\d S =\int_{\Gamma} f_{\Gamma}z_{\Gamma}\d S.
		\end{equation*}
  Since $z_\Gamma\in H^{2}(\Gamma)$ is arbitrary, this implies that $y_{\Gamma}$ is a weak solution of the elliptic equation
		\begin{equation}
			\delta \Delta_{\Gamma}^{2}y_{\Gamma}=f_{\Gamma}-\frac{d}{\kappa}(y-  y_{\Gamma})\in L^{2}(\Gamma). \label{form3}
		\end{equation}
		Recall
			that $\Gamma$ is a compact submanifold of class $C^2$ without boundary. Hence, we can apply the regularity given by the isomorphism \eqref{iso} to deduce that $y_{\Gamma}\in H^{4}(\Gamma)$.
		Choosing $z_{\Gamma}=0$ in \eqref{w1}. This yields
		\begin{equation}
			d\int_\Omega \Delta y \Delta z \,\d x +\frac{d}{\kappa}\int_{\Gamma}(y-  y_{\Gamma})z\d S=\int_{\Omega} fz\d x\qquad \forall z\in H^{2}(\Omega),
			\label{w2}
		\end{equation}
		with $\partial_{\nu}z=0$. In particular
  \begin{align*}
			&& d\int_\Omega \Delta y \Delta z \,\d x =\int_{\Omega} fz\d x\qquad \forall z\in D(\Omega),
		\end{align*}
  this implies that the distributional derivative $\Delta^{2}y$ belongs to $L^{2}(\Omega)$
and satisfies $$d\Delta^{2}y=f.$$
Returning to \eqref{w2}, we obtain
\begin{equation*}
-d\int_{\Gamma}\partial_{\nu}\Delta y z \d x+ \frac{d}{\kappa}\int_{\Gamma}(y-  y_{\Gamma})z\d S=0 \qquad \forall z\in H^{2}(\Omega)\;\text{with}\;\partial_{\nu}z=0.
\end{equation*}
Note that for any $w\in H^{3/2}(\Gamma)$, we can find 
$z\in H^{2}(\Omega)$ such that $z=w$ and $\partial_{\nu}z=0$ on $\Gamma$, thus, 
$$-d\int_{\Gamma}\partial_{\nu}\Delta y w \d x+ \frac{d}{\kappa}\int_{\Gamma}(y-  y_{\Gamma})w\d S=0 \qquad \forall w\in H^{3/2}(\Gamma).
$$
By density argument, it follows that
  \begin{align*}
      \partial_{\nu}(\Delta y)=\frac{1}{\kappa}(y- y_{\Gamma}).
  \end{align*}
  Consequently, $y$ is a weak solution to the following problem
		\begin{equation}
			\left\{
			\begin{aligned}
				& d\Delta^{2} y=f\in L^{2}(\Omega),
				& & \text {in}\;\Omega, \\
				& \partial_{\nu}(\Delta y)=\frac{1}{\kappa}(y- y_{\Gamma})\in H^{1/2}(\Gamma), & & \text {on}\;\Gamma.
			\end{aligned}
			\right. \label{form4}
		\end{equation}
		This allows us to apply elliptic regularity theory for Poisson’s equation with inhomogeneous Neumann boundary condition (see \cite[Proposition 7.5]{Ta'11}) to deduce that 
		$\Delta y\in
			H^{2}(\Omega)$
		which is combined with $\partial_{\nu}y=0$ to conclude that $y\in H^{4}(\Omega)$.\\
		Consequently $(y,y_{\Gamma})\in D(\mathcal{A}),$ and from \eqref{form3} and \eqref{form4}, we obtain $\mathcal{A}(y,y_{\Gamma})=-\widetilde{\mathcal{A}}(y,y_{\Gamma})$.
        
		Since the form $\mathfrak{a}$ is symmetric and accretive, the operator $\mathcal{A}$ is self-adjoint and dissipative. It follows from \cite[Theorem 1.52]{Ou'05} that $\mathcal{A}$ generates a $C_0$-semigroup on $\mathbb{L}^2$ that is self-adjoint, contractive, and analytic. Therefore, the operator $\mathcal{A}$ is self-adjoint and bounded above (it is dissipative). Then, \cite[Corollary 4.7]{EN'00} yields that $\mathcal{A}$ generates an analytic $C_0$-semigroup of angle $\frac{\pi}{2}$.
		
		Now, let $\lambda>0$. By \cite[Theorem VI.2.23]{Ka'95}, we have $$D\left((-\mathcal{A})^\frac{1}{2}\right)=D\left((\lambda-\mathcal{A})^\frac{1}{2}\right)=D(\mathfrak{a}).$$
		On the other hand, it follows from \cite[Theorem 2.33]{Ya'10} that $$[\mathbb L^2, D(\mathcal{A})]_{\frac{1}{2}}=D\left((-\mathcal{A})^\frac{1}{2}\right).$$
		Thus, the proof is completed.
	\end{proof}
	
	\begin{remark}\label{rmk1}
		The operator $\mathcal{A}$ is nonpositive, that is
		$$\forall (u,u_\Gamma)\in D(\mathcal{A}), \qquad \langle \mathcal{A}(u,u_\Gamma), (u,u_\Gamma)\rangle_{\mathbb L^2} \le 0.$$
		However, it is not negative. Indeed, the null space
		$$\mathcal{N}(\mathcal{A}):=\left\{(u,u_\Gamma)\in D(\mathcal{A}) \colon \mathcal{A}(u,u_\Gamma)=0 \right\}$$
		contains the set of constant functions 
		$$\mathcal{C}:=\left\{\left(c,c\right) \colon c \text{ is a constant function in } \overline{\Omega}\right\}.$$
	\end{remark}
	
	\begin{lemma}
		The operator $\mathcal{A}$ has compact resolvent, and the $C_0$-semigroup $\left(\mathrm{e}^{t\mathcal{A}}\right)_{t\geq 0}$ is immediately compact, i.e., the operator $\mathrm{e}^{t\mathcal{A}}$ is compact on $\mathbb{L}^2$ for all $t>0$.
	\end{lemma}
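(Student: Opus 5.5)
The compactness of the resolvent will follow from a compact embedding of the form domain into $\mathbb{L}^2$. By Theorem \ref{thm1}, for $\lambda>0$ the resolvent $R(\lambda,\mathcal{A})=(\lambda-\mathcal{A})^{-1}$ maps $\mathbb{L}^2$ boundedly into $D(\mathcal{A})$ equipped with the graph norm. In fact it maps into $D(\mathfrak{a})$ equipped with $\|\cdot\|_{D(\mathfrak{a})}$. To see this, take $U=R(\lambda,\mathcal{A})F$. The identity $\mathcal{A}=-\widetilde{\mathcal{A}}$ gives
$$\mathfrak{a}[U,U]+\lambda\|U\|^2_{\mathbb{L}^2}=\langle F,U\rangle_{\mathbb{L}^2}.$$
Since $\|U\|_{\mathbb{L}^2}\le \lambda^{-1}\|F\|_{\mathbb{L}^2}$, this yields $\|U\|_{D(\mathfrak{a})}\le C\|F\|_{\mathbb{L}^2}$. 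Estimate \eqref{f1} then shows that $\|U\|_{\mathcal{H}^2}\le C'\|F\|_{\mathbb{L}^2}$.

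Hence it suffices to show that the embedding $\mathcal{H}^2=H^2(\Omega)\times H^2(\Gamma)\hookrightarrow \mathbb{L}^2$ is compact. This holds componentwise. The embedding $H^2(\Omega)\hookrightarrow L^2(\Omega)$ is compact by the Rellich--Kondrachov theorem, since $\Omega$ is bounded with $C^4$ (in particular Lipschitz) boundary. The embedding $H^2(\Gamma)\hookrightarrow L^2(\Gamma)$ is compact because $\Gamma$ is a compact Riemannian manifold without boundary. One can argue this with a finite atlas and a partition of unity, reducing to Rellich on bounded Euclidean charts, or simply cite \cite{Ta'11}. A product of compact operators is compact, so the resolvent $R(\lambda,\mathcal{A})$ is the composition of a bounded map into $\mathcal{H}^2$ with a compact embedding, and is therefore compact. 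By the resolvent identity, compactness at a single $\lambda$ extends to every $\lambda\in\rho(\mathcal{A})$.

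For the semigroup, I will use the standard fact \cite[Theorem II.4.29]{EN'00}: a $C_0$-semigroup is immediately compact if and only if it is norm continuous for $t>0$ and its generator has compact resolvent. Norm continuity on $(0,\infty)$ follows from the analyticity established in Theorem \ref{thm1}. Alternatively, the spectral theorem gives this directly. A self-adjoint operator with compact resolvent has an orthonormal eigenbasis with eigenvalues $-\lambda_n\to-\infty$. For $t>0$, $e^{t\mathcal{A}}$ is then the norm limit of finite-rank operators, since $e^{-t\lambda_n}\to 0$.

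The only point that needs care is the compactness of $H^2(\Gamma)\hookrightarrow L^2(\Gamma)$ on the $C^4$ manifold $\Gamma$. Even this is routine, and it can be weakened to $H^1(\Gamma)\hookrightarrow L^2(\Gamma)$.
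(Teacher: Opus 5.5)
Your proof is correct and follows the same route as the paper. Both arguments bound the resolvent into the form domain, pass to $\mathcal{H}^2$ via \eqref{f1}, use the compact embeddings $H^2(\Omega)\hookrightarrow L^2(\Omega)$ and $H^2(\Gamma)\hookrightarrow L^2(\Gamma)$, and deduce immediate compactness from analyticity via \cite[Theorem 4.29]{EN'00}. The only difference is that you verify, through the identity $\mathfrak{a}[U,U]+\lambda\|U\|^2_{\mathbb{L}^2}=\langle F,U\rangle_{\mathbb{L}^2}$, the standard fact that $R(\lambda,\mathcal{A})$ maps $\mathbb{L}^2$ boundedly into $(D(\mathfrak{a}),\|\cdot\|_{D(\mathfrak{a})})$, which the paper takes for granted.
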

	\begin{proof}
		To show that $\mathcal{A}$ has compact resolvent, it suffices to prove that the injection of $(D(\mathfrak{a}), \|\cdot\|_{\mathfrak{a}})$ into $\mathbb{L}^2$ is compact. Using \eqref{f1}, we obtain $(D(\mathfrak{a}), \|\cdot\|_{\mathfrak{a}})\hookrightarrow(\mathcal{H}^2, \|\cdot\|_{\mathcal{H}^2})$ is bounded, the embeddings $H^{2}(\Omega) \hookrightarrow L^2(\Omega)$ and $H^{2}(\Gamma) \hookrightarrow L^2(\Gamma)$
		are both compact by the Sobolev embedding theorem. Then $(D(\mathfrak{a}), \|\cdot\|_{\mathfrak{a}}) \hookrightarrow \mathbb{L}^2$ is compact.
		Since $\left(\mathrm{e}^{t\mathcal{A}}\right)_{t\geq 0}$ is immediately norm continuous (it is analytic), then \cite[Theorem 4.29]{EN'00} implies that $\left(\mathrm{e}^{t\mathcal{A}}\right)_{t\geq 0}$ is immediately compact.
	\end{proof}
	Therefore, we have the following spectral result:
	\begin{corollary}\label{c.spectrum}
		There exists an orthonormal basis $(\Phi_{n})_{n\in\mathbb{N}}$ of $\mathbb{L}^2$ consisting of eigenfunctions of $\mathcal{A}$, i.e.,
		$\mathcal{A} \Phi_{n} = -\lambda_{n} \Phi_{n}$, where the sequence $(\lambda_{n})_{n\in \mathbb N}$ is such that $0=\lambda_{1} <\lambda_{2} \leq\dots \lambda_{n} \leq \ldots\to \infty$. In particular, $s(-\mathcal{A})=0$ (the spectral bound). Moreover, the $C_0$-semigroup is given by
		\begin{equation} \label{sgdecomp}
			\mathrm{e}^{t\mathcal{A}} \mathfrak{u}=\sum_{n=1}^\infty \mathrm{e}^{-\lambda_{n} t}\langle \mathfrak{u},\Phi_{n}\rangle_{\mathbb{L}^2}\Phi_{n} \qquad \forall \mathfrak{u} \in \mathbb{L}^2.
		\end{equation}
		Furthermore, the null space of $\mathcal{A}$ is given by $\mathcal{N}(\mathcal{A})=\mathcal{C}$.
	\end{corollary}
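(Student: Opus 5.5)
The plan is to obtain the eigenbasis from the spectral theorem for self-adjoint operators with compact resolvent, and then to identify the kernel directly from the form $\mathfrak{a}$.

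\textbf{Orthonormal basis and semigroup expansion.} By Theorem \ref{thm1}, $-\mathcal{A}$ is self-adjoint and nonnegative on $\mathbb{L}^2$. By the preceding lemma, $(I-\mathcal{A})^{-1}$ is compact, and it is also self-adjoint and positive. The spectral theorem for compact self-adjoint operators therefore gives an orthonormal basis $(\Phi_n)$ of eigenvectors of $(I-\mathcal{A})^{-1}$ with eigenvalues $\mu_n\in(0,1]$, $\mu_n\to0$. Setting $\lambda_n:=\mu_n^{-1}-1$ yields $\mathcal{A}\Phi_n=-\lambda_n\Phi_n$, with $\lambda_n\ge0$ by dissipativity and $\lambda_n\to\infty$. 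Each eigenvalue has finite multiplicity, so the $\lambda_n$ can be ordered nondecreasingly. Since $\sigma(\mathcal{A})=\{-\lambda_n\}$, we get $s(-\mathcal{A})=\lambda_1$ in the sign convention where the spectral bound refers to the smallest $\lambda_n$, i.e. $\max\sigma(\mathcal{A})=0$. The expansion \eqref{sgdecomp} is then the functional calculus: both sides are $C_0$-semigroups that coincide on each $\Phi_n$, since $t\mapsto e^{-\lambda_n t}\Phi_n$ solves the Cauchy problem. They are bounded, so they agree on the closed span, which is all of $\mathbb{L}^2$.

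\textbf{Null space.} Remark \ref{rmk1} already gives $\mathcal{C}\subset\mathcal{N}(\mathcal{A})$, so $0$ is an eigenvalue and $\lambda_1=0$. For the reverse inclusion, take $(u,u_\Gamma)\in\mathcal{N}(\mathcal{A})$. By the identity $\langle\mathcal{A}U,U\rangle=-\mathfrak{a}[U,U]$ established in the proof of Theorem \ref{thm1}, we get
\[
0=d\|\Delta u\|^2_{L^2(\Omega)}+\delta\|\Delta_\Gamma u_\Gamma\|^2_{L^2(\Gamma)}+\tfrac{d}{\kappa}\|u-u_\Gamma\|^2_{L^2(\Gamma)},
\]
so each term vanishes. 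From $\Delta u=0$ in $\Omega$ and $\partial_\nu u=0$, Green's formula gives $\int_\Omega|\nabla u|^2\,\d x=0$. Since $\Omega$ is connected, $u=c$ is constant. Similarly, \eqref{sdt} with $\Delta_\Gamma u_\Gamma=0$ shows that $\nabla_\Gamma u_\Gamma=0$. We then conclude $u_\Gamma=u|_\Gamma=c$ directly from the coupling term, which avoids needing connectedness of $\Gamma$. Hence $(u,u_\Gamma)=(c,c)\in\mathcal{C}$.

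\textbf{Strict gap.} Because $\mathcal{N}(\mathcal{A})=\mathcal{C}$ is one-dimensional, spanned by $\mathds{1}$, the eigenvalue $0$ is simple. Thus $\lambda_1=0<\lambda_2$, with $\Phi_1=\mathds{1}/\|\mathds{1}\|_{\mathbb{L}^2}$. The only mildly delicate point in the argument is this kernel identification, and in particular using the coupling term rather than connectedness of $\Gamma$. Everything else is a standard application of the spectral theorem.
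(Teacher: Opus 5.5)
Your proposal is correct and follows essentially the same route as the paper: the spectral theorem for the self-adjoint operator with compact resolvent, then the kernel $\mathcal{N}(\mathcal{A})$ identified from $\mathfrak{a}[U,U]=0$ via Green's formula with $\partial_\nu u=0$, connectedness of $\Omega$, and the coupling term for $u_\Gamma$. You also make explicit two points the paper leaves implicit: the strict gap $\lambda_1<\lambda_2$, which you derive from $\dim\mathcal{N}(\mathcal{A})=1$, and the reading of $s(-\mathcal{A})=0$ as $\sup\sigma(\mathcal{A})=0$, which is needed because the paper's literal definition would give $s(-\mathcal{A})=+\infty$.
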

	
	\begin{proof}
		Since $\mathcal{A}$ is self-adjoint and has compact resolvent, most of the above facts follow from the standard spectral theory. By Remark \ref{rmk1}, we have $\mathcal{C} \subset \mathcal{N}(\mathcal{A})$. Then the first eigenvalue of $\mathcal{A}$ is $-\lambda_{1}=0$. Conversely, for $(y,y_\Gamma) \in \mathcal{N}(\mathcal{A})$, we have
		\begin{align*}
			\mathfrak a[(y,y_\Gamma),(y,y_\Gamma)]&= d\int_\Omega |\Delta y|^2 \,\d x +\frac{d}{\kappa}\int_{\Gamma}(y- y_{\Gamma})^{2}\d S+ \delta\int_\Gamma |\Delta_\Gamma y_\Gamma|^2 \,\d S \\
			&= \langle -\mathcal{A}(y,y_\Gamma),(y,y_\Gamma)\rangle_{\mathbb L^2}=0.
		\end{align*}
		Then $\Delta y=0$ in $\Omega$ and $y- y_{\Gamma}=0$ on $\Gamma$. Multiplying the first equation by $y$ and using Green's formula in $\Omega$ with $\partial_{\nu}y=0$, we obtain $\nabla y=0$. Since $\Omega$ is connected, it follows that $y=c$ is constant on $\Omega$. From $y-y_{\Gamma}=0$ in $\Gamma$, we obtain $y_{\Gamma}=c$ in $\Gamma$. This completes the proof.
	\end{proof}
	Consequently, we have the following asymptotic behavior of the semigroup $\left(\mathrm{e}^{t\mathcal{A}}\right)_{t\geq 0}$, which follows immediately by Parseval's identity.
	\begin{corollary}
		The $C_0$-semigroup $\left(\mathrm{e}^{t\mathcal{A}}\right)_{t\geq 0}$ satisfies
		$$\|\mathrm{e}^{t\mathcal{A}}\|_{\mathcal{L}(\mathbb{L}^2,\mathbb{L}^2)}\leq 1 \qquad \mbox{and} \qquad \|\mathrm{e}^{t\mathcal{A}}-\mathds{P}\|_{\mathcal{L}(\mathbb{L}^2,\mathbb{L}^2)}\leq e^{-\lambda_{2} t} \qquad \forall t\ge 0,$$
		where $\mathds{P}$ is the orthogonal projection of $\,\mathbb{L}^{2}$ onto the vector space spanned by $\mathds{1}$: 
		$$\mathds{P}\mathfrak{u}:=\frac{1}{\mu(\overline{\Omega})}\left(\int_{\Omega}u\d x+ \int_{\Gamma}u_{\Gamma}\d S\right)\mathds{1}\qquad \forall \mathfrak{u}=(u,u_{\Gamma})\in \mathbb{L}^2.$$
	\end{corollary}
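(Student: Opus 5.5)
The plan is to read everything off the spectral decomposition \eqref{sgdecomp} of Corollary \ref{c.spectrum}, combined with Parseval's identity in $\mathbb{L}^2$.

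\emph{Contractivity.} For $\mathfrak{u}\in\mathbb{L}^2$, Parseval gives
$$\|\mathrm{e}^{t\mathcal{A}}\mathfrak{u}\|^2_{\mathbb{L}^2}=\sum_{n\ge1}\mathrm{e}^{-2\lambda_n t}|\langle\mathfrak{u},\Phi_n\rangle_{\mathbb{L}^2}|^2 .$$
Since every $\lambda_n\ge0$, the right-hand side is at most $\sum_{n}|\langle\mathfrak{u},\Phi_n\rangle_{\mathbb{L}^2}|^2=\|\mathfrak{u}\|^2_{\mathbb{L}^2}$. This yields the first bound, which is also already contained in Theorem \ref{thm1}.

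\emph{Identifying the projection.} By Corollary \ref{c.spectrum}, $\lambda_1=0$ is simple and its eigenspace is $\mathcal{N}(\mathcal{A})=\mathcal{C}=\mathrm{span}\{\mathds{1}\}$. We may therefore take
$$\Phi_1=\mathds{1}/\|\mathds{1}\|_{\mathbb{L}^2}, \qquad \|\mathds{1}\|^2_{\mathbb{L}^2}=|\Omega|+|\Gamma|_\Gamma=\mu(\overline{\Omega}).$$
Consequently $\langle\mathfrak{u},\Phi_1\rangle_{\mathbb{L}^2}\Phi_1=\mathds{P}\mathfrak{u}$, and this is exactly the formula for $\mathds{P}$ in the statement. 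It is the orthogonal projection onto $\mathrm{span}\{\mathds{1}\}$.

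\emph{Decay estimate.} Subtracting the $n=1$ term gives
$$\mathrm{e}^{t\mathcal{A}}\mathfrak{u}-\mathds{P}\mathfrak{u}=\sum_{n\ge2}\mathrm{e}^{-\lambda_n t}\langle\mathfrak{u},\Phi_n\rangle_{\mathbb{L}^2}\Phi_n .$$
Since $\lambda_n\ge\lambda_2$ for $n\ge2$, Parseval bounds the squared norm of this by
$$\mathrm{e}^{-2\lambda_2 t}\sum_{n\ge2}|\langle\mathfrak{u},\Phi_n\rangle_{\mathbb{L}^2}|^2\le \mathrm{e}^{-2\lambda_2 t}\|\mathfrak{u}\|^2_{\mathbb{L}^2}.$$
Taking square roots and the supremum over $\|\mathfrak{u}\|_{\mathbb{L}^2}\le1$ gives the second bound for every $t\ge0$.

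\emph{Main point.} The only step needing care is the second one. It relies on $\lambda_2>0$, i.e.\ on the simplicity of the eigenvalue $0$, which is precisely the statement $\mathcal{N}(\mathcal{A})=\mathcal{C}$ from Corollary \ref{c.spectrum}. Everything else is routine.
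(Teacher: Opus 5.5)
Your proof is correct and takes the same route as the paper, which simply says the estimates follow from the spectral decomposition \eqref{sgdecomp} by Parseval's identity; you have supplied the details, including the identification $\langle\mathfrak{u},\Phi_1\rangle_{\mathbb{L}^2}\Phi_1=\mathds{P}\mathfrak{u}$ via $\|\mathds{1}\|^2_{\mathbb{L}^2}=\mu(\overline{\Omega})$. One small refinement: simplicity of the eigenvalue $0$ is what makes $\lambda_2>0$, so that the bound is a genuine exponential decay estimate, whereas the inequality itself only needs $\Phi_1$ to be a normalized multiple of $\mathds{1}$ (which is in $\mathcal{N}(\mathcal{A})$ by Remark \ref{rmk1}).
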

	In the sequel, we will use the following notions of solutions.
	\begin{definition}
		Let $(f,f_\Gamma) \in L^{2}(0,T;\mathbb{L}^{2})$ and $Y_0:=(y_0, y_{0,\Gamma})\in \mathbb{L}^2$.
		\begin{enumerate}[label=(\alph*),leftmargin=*]
			\item A strong solution of \eqref{eq1to4} is a function $Y:=(y, y_{\Gamma}) \in \mathbb{Y}_1 :=H^1(0,T;\mathbb{L}^2) \cap L^2(0,T;D(\mathcal{A}))$ fulfilling \eqref{eq1to4} in $L^2(0,T; \mathbb{L}^2)$.
			\item A mild solution of \eqref{eq1to4} is a function $Y:=(y, y_{\Gamma}) \in C([0,T]; \mathbb{L}^2)$ satisfying, for $t\in [0,T]$,
			$$Y(t,\cdot)=\mathrm{e}^{t\mathcal{A}} Y_0 + \int_0^t e^{(t-\tau)\mathcal{A}} [f(\tau, \cdot), f_\Gamma(\tau, \cdot)] \d\tau.$$
		\end{enumerate}
	\end{definition}
	
	The analyticity of the semigroup $\left(\mathrm{e}^{t\mathcal{A}}\right)_{t\geq 0}$ generated by the system \eqref{eq1to4} implies the following regularity result (see, e.g., \cite[Theorem 3.4]{Ya'10}).
	\begin{proposition}
		Let $(f,f_\Gamma) \in L^{2}(0,T;\mathbb{L}^{2})$.
		\begin{enumerate}[label=(\roman*),leftmargin=*]
			\item  For all $Y_0:=(y_0,y_{0,\Gamma})\in D(\mathfrak{a})$, there exists a unique strong solution of \eqref{eq1to4}.
            Moreover, there exists a constant $C>0$ (independent of $Y_0$ and $(f,f_\Gamma)$) such that
            \begin{eqnarray}
                \|Y\|_{\mathbb{Y}_1}\leq C\left(\|Y_{0}\|_{D(\mathfrak{a})}+\|(f,f_{\Gamma})\|_{L^{2}(0,T;\mathbb{L}^{2})} \right). 
            \end{eqnarray}
			\item For all $Y_0:=(y_0,y_{0,\Gamma})\in \mathbb{L}^2$, there exists a unique mild solution of \eqref{eq1to4} $Y:=(y,y_{\Gamma})\in C([0,T]; \mathbb{L}^2)$.
            Moreover, there exists a constant $C>0$ (independent of $Y_0$ and $(f,f_\Gamma)$) such that
            \begin{eqnarray}
                \|Y\|_{C([0,T]; \mathbb{L}^2)}\leq C\left(\|Y_{0}\|_{\mathbb{L}^{2}}+\|(f,f_{\Gamma})\|_{L^{2}(0,T;\mathbb{L}^{2})} \right). \label{energy estimate}
            \end{eqnarray}
            
		\label{prop2}
        \end{enumerate}
	\end{proposition}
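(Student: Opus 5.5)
The plan is to derive both parts from the maximal $L^2$-regularity of the generator $\mathcal{A}$ on the Hilbert space $\mathbb{L}^2$. Everything needed is already established: $\mathcal{A}$ is self-adjoint and nonpositive, it generates a contractive analytic semigroup (Theorem \ref{thm1}), and the trace space satisfies $[\mathbb L^2, D(\mathcal{A})]_{\frac12}=D(\mathfrak{a})$. Throughout, $C$ may depend on $T$.

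\textbf{Part (ii).} Define $Y$ by the variation of constants formula. Since the semigroup is strongly continuous and $F=(f,f_\Gamma)\in L^2(0,T;\mathbb{L}^2)\subset L^1(0,T;\mathbb{L}^2)$, the function $Y$ belongs to $C([0,T];\mathbb{L}^2)$; continuity of the convolution term follows from dominated convergence. Uniqueness is immediate, because a mild solution is defined by this formula. Contractivity $\|e^{t\mathcal{A}}\|\le 1$ and the Cauchy--Schwarz inequality give
\[
\|Y(t)\|_{\mathbb{L}^2}\le \|Y_0\|_{\mathbb{L}^2}+\sqrt{T}\,\|F\|_{L^2(0,T;\mathbb{L}^2)},
\]
which is \eqref{energy estimate}.

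\textbf{Part (i).} Split $Y=e^{t\mathcal{A}}Y_0+(e^{\cdot\mathcal{A}}*F)$ and treat the two pieces separately.

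For the homogeneous piece, expand in the orthonormal eigenbasis of Corollary \ref{c.spectrum}. By \eqref{sgdecomp} the coefficients are $e^{-\lambda_n t}\langle Y_0,\Phi_n\rangle$. Then
\[
\|\mathcal{A}e^{t\mathcal{A}}Y_0\|_{L^2(0,T;\mathbb{L}^2)}^2=\sum_n \lambda_n^2\int_0^T e^{-2\lambda_n t}\,\d t\,|\langle Y_0,\Phi_n\rangle|^2\le \frac12\sum_n\lambda_n|\langle Y_0,\Phi_n\rangle|^2 .
\]
The right-hand side equals $\tfrac12\,\mathfrak{a}[Y_0,Y_0]$, since $D(\mathfrak{a})=D((-\mathcal{A})^{1/2})$ and $\mathfrak{a}[u,u]=\|(-\mathcal{A})^{1/2}u\|^2$. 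The same bound holds for $\partial_t$ of this piece, because $\partial_t e^{t\mathcal{A}}Y_0=\mathcal{A}e^{t\mathcal{A}}Y_0$. The $L^2(0,T;\mathbb{L}^2)$ norm is at most $\sqrt{T}\|Y_0\|$. Hence the homogeneous piece lies in $\mathbb{Y}_1$ with norm $\le C\|Y_0\|_{D(\mathfrak{a})}$.

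For the forced piece, again work coefficientwise. Set $w_n(t)=\int_0^t e^{-\lambda_n(t-s)}F_n(s)\,\d s$ with $F_n=\langle F,\Phi_n\rangle$. Young's convolution inequality gives
\[
\|\lambda_n w_n\|_{L^2(0,T)}\le \lambda_n\|e^{-\lambda_n\cdot}\|_{L^1(0,\infty)}\|F_n\|_{L^2(0,T)}\le \|F_n\|_{L^2(0,T)} .
\]
Summing over $n$ yields $\|\mathcal{A}w\|_{L^2(0,T;\mathbb{L}^2)}\le\|F\|_{L^2(0,T;\mathbb{L}^2)}$. Then $w'=\mathcal{A}w+F$ controls $\partial_t w$ as well. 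This is maximal regularity on Hilbert spaces for self-adjoint nonpositive operators, which the paper could alternatively cite from \cite[Theorem 3.4]{Ya'10}.

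Finally, $Y$ satisfies $\partial_tY=\mathcal{A}Y+F$ in $L^2(0,T;\mathbb{L}^2)$. Unpacking $D(\mathcal{A})$ shows that this equation, together with the domain constraints, is exactly \eqref{eq1to4}. Uniqueness holds because the difference of two strong solutions solves the homogeneous problem with zero data, and the energy identity $\frac{\d}{\d t}\|Y\|^2=2\langle\mathcal{A}Y,Y\rangle\le0$ forces it to vanish.

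The main point of care is the justification of the coefficientwise computations, namely interchanging the sums with the time integrals and the identification $\mathfrak{a}[u,u]=\|(-\mathcal{A})^{1/2}u\|^2$. Both follow from Theorem \ref{thm1} and monotone convergence, so no real obstacle is expected.
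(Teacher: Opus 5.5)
Your proof is correct, but it takes a different, more hands-on route than the paper. The paper does no computation. It invokes the analyticity of $\left(\mathrm{e}^{t\mathcal{A}}\right)_{t\geq 0}$ together with the abstract maximal $L^2$-regularity theorem \cite[Theorem 3.4]{Ya'10} on Hilbert spaces. The admissible initial data $D(\mathfrak{a})$ come from the interpolation identity $[\mathbb L^2, D(\mathcal{A})]_{\frac{1}{2}}=D(\mathfrak{a})$ of Theorem \ref{thm1}.

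You instead prove that maximal-regularity statement directly for this operator. You diagonalize $\mathcal{A}$ in the eigenbasis of Corollary \ref{c.spectrum}, and you use $\mathfrak{a}[u,u]=\|(-\mathcal{A})^{1/2}u\|^2$ on $D(\mathfrak{a})=D((-\mathcal{A})^{1/2})$ in place of the interpolation identity.

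\emph{What each approach buys.} Your version is self-contained and gives explicit constants: $\|\mathcal{A}w\|_{L^2(0,T;\mathbb{L}^2)}\le\|F\|_{L^2(0,T;\mathbb{L}^2)}$ uniformly in $T$, and $\|\mathcal{A}\mathrm{e}^{\cdot\mathcal{A}}Y_0\|^2_{L^2(0,T;\mathbb{L}^2)}\le\frac12\mathfrak{a}[Y_0,Y_0]$. The price is that it rests on self-adjointness; compactness of the resolvent is not really needed, since the same computation works with the spectral theorem. The cited result only uses analyticity on a Hilbert space. It would therefore carry over to the non-symmetric lower-order perturbations mentioned in the introduction, where no orthonormal eigenbasis is available.

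\emph{Details to make explicit when you write it out.}
\begin{itemize}
\item $w(t)\in D(\mathcal{A})$ for a.e.\ $t$, because your integrated bound gives $\sum_n\lambda_n^2|w_n(t)|^2<\infty$ a.e.
\item $w\in H^1(0,T;\mathbb{L}^2)$, because each $w_n$ is absolutely continuous with $w_n'=-\lambda_nw_n+F_n$, so $w(t)=\int_0^t(\mathcal{A}w+F)\,\d s$.
\item $\|w\|_{L^2(0,T;\mathbb{L}^2)}\le T\|F\|_{L^2(0,T;\mathbb{L}^2)}$, which follows from part (ii) and is needed for both the graph norm of $D(\mathcal{A})$ and the $H^1$ norm.
\end{itemize}
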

	
    \begin{remark}
        In the absence of surface diffusion, i.e., $\delta=0$, certain properties, such as $H^{4}$
  regularity in the bulk and compactness, are no longer guaranteed. However, it is still possible to prove well-posedness and regularity results adapted to the following domains: 
  \begin{align*}
      D(\mathfrak{a})&=\left\{(y,y_{\Gamma})\in H^{2}(\Omega)\times L^{2}(\Gamma)\;:\; \partial_{\nu}y=0\right\},\\
      D(\mathcal{A})&=\left\{(y,y_{\Gamma})\in H^{2}(\Omega)\times L^{2}(\Gamma)\;:\; \Delta^{2}y\in L^{2}(\Omega),\; \partial_{\nu}y=0,\; 
 \kappa\partial_{\nu}\Delta y=y-y_{\Gamma} \right\}.
  \end{align*}
    \end{remark}
    
	\section{Eventual positivity of the semigroup} \label{sec4}
	In this section, we analyze some positivity properties related to the solution of \eqref{eq1to4}. We recall that the space $\mathbb{L}^2$ is a Hilbert lattice whose positive cone is the product of the positive cones of $L^2(\Omega)$ and $L^2(\Gamma)$, respectively. For any any real functions $u$ and $v$ defined in $\Omega$ or on $\Gamma$, respectively, and $\mathfrak{u}:=(u,u_\Gamma)\in \mathbb{L}^2 $, we adopt the following notations:
 \begin{align*}
     &&u \vee v:=\max(u,v),   \qquad v\wedge u:=\min(u,v),\\
     &&u^{+}:=u\vee 0,  \qquad \qquad\quad\quad u^{-} :=-(u\wedge 0),\\
     &&\mathfrak{u}^+:=(u^+, u_\Gamma^+), \qquad \qquad\quad \mathfrak{u}^- :=(u^-, u_\Gamma^-).
 \end{align*}

 \begin{proposition}\label{nonpos} 
		The following properties hold:
		\begin{enumerate}[label=(\roman*),leftmargin=*]
			\item The $C_0$-semigroup $\left(\mathrm{e}^{t\mathcal{A}}\right)_{t\geq 0}$ is not positive.
			\item The $C_0$-semigroup $\left(\mathrm{e}^{t\mathcal{A}}\right)_{t\geq 0}$ is not $\mathbb{L}^{\infty}$-contractive.
		\end{enumerate}
	\end{proposition}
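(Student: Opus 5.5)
We use the Beurling--Deny type characterizations for semigroups associated with closed symmetric accretive forms (see \cite[Theorems 2.6 and 2.14]{Ou'05}). Since $\mathfrak{a}$ is densely defined, symmetric, accretive and closed (Lemma \ref{genform}), the semigroup $\left(\mathrm{e}^{t\mathcal{A}}\right)_{t\geq 0}$ is positive if and only if $\mathfrak{u}\in D(\mathfrak{a})$ implies $\mathfrak{u}^+\in D(\mathfrak{a})$ and $\mathfrak{a}[\mathfrak{u}^+,\mathfrak{u}^-]\le 0$. Likewise, it is $\mathbb{L}^\infty$-contractive if and only if $\mathfrak{u}\in D(\mathfrak{a})$ implies $(1\wedge|\mathfrak{u}|)\operatorname{sign}\mathfrak{u}\in D(\mathfrak{a})$ and $\mathfrak{a}$ satisfies the associated inequality. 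In particular, both properties require $D(\mathfrak{a})$ to be stable under a Lipschitz but non-smooth truncation. The plan is to show that $D(\mathfrak{a})\subset\mathcal{H}^2$ is not stable under such truncations, because $H^2$ is not invariant under $u\mapsto u^+$.

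For (i), I will construct an explicit $\mathfrak{u}=(u,u_\Gamma)\in D(\mathfrak{a})$ with $\mathfrak{u}^+\notin D(\mathfrak{a})$. The cleanest choice is to keep the boundary part harmless: take $u_\Gamma=0$ and $u=\varphi\in D(\Omega)$. Then $\partial_\nu u=0$ holds trivially, and the boundary component of $\mathfrak{u}^+$ is $0\in H^2(\Gamma)$. Choose a ball $B(x_0,r)\Subset\Omega$ and set $\varphi(x)=\psi(x)\,(x_1-x_{0,1})$, where $\psi\in D(B(x_0,r))$ and $\psi\equiv1$ near $x_0$. Near $x_0$ we then have $\varphi^+=(x_1-x_{0,1})^+$. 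Its gradient has a jump across the hyperplane $\{x_1=x_{0,1}\}$, so its second distributional derivative $\partial_{11}\varphi^+$ contains a surface measure on that hyperplane, which is not in $L^2$. Hence $\varphi^+\notin H^2(\Omega)$, so $\mathfrak{u}^+\notin D(\mathfrak{a})$, and the first Beurling--Deny criterion fails. This shows the semigroup is not positive.

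For (ii), I will use the same construction with the truncation at level $1$. Take $\mathfrak{u}=(\varphi,0)$ with $\varphi=\psi\cdot(1+(x_1-x_{0,1}))$, where $\psi\equiv1$ near $x_0$, so that $\varphi$ crosses the value $1$ transversally at $x_0$. The truncated function $(1\wedge|\varphi|)\operatorname{sign}\varphi$ then coincides near $x_0$ with $1-(x_1-x_{0,1})^-$. This again has a gradient jump across a hyperplane, so it is not in $H^2$. The condition $\partial_\nu=0$ is unaffected because the construction is compactly supported in $\Omega$, and the boundary component stays $0$. Hence $D(\mathfrak{a})$ is not invariant under the $\mathbb{L}^\infty$-truncation, and the semigroup is not $\mathbb{L}^\infty$-contractive.

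The main obstacle is to confirm that the cited criteria apply exactly in this setting. They require a real Hilbert lattice $L^2(X,\mu)$, and $\mathbb{L}^2\cong L^2(\overline\Omega,\mu)$ has this structure, as recalled in Section \ref{sec2}. They also require that the ``only if'' direction includes the invariance of the form domain, which is the case for closed forms in \cite{Ou'05}. If one preferred to avoid citing the criteria, an alternative for (i) would be a direct argument: positivity would force $\mathcal{A}$ to satisfy the positive minimum principle, i.e. $\langle \mathcal{A}\mathfrak{u},\mathfrak{v}\rangle\ge0$ for $\mathfrak{u},\mathfrak{v}\ge0$ with disjoint supports. This fails for the local but fourth-order operator $-d\Delta^2$ only through more delicate test functions, so I will rely on the form criteria.
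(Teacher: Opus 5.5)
Your proof is correct and is essentially the paper's. Both use Ouhabaz's invariance criteria and a compactly supported function equal to $x_1-x_{0,1}$ (resp.\ $1\pm(x_1-x_{0,1})$) near an interior point, whose truncation has a gradient jump across a hyperplane and so is not in $H^2(\Omega)$; for (ii) the paper uses $(1-\psi,1)$ with $\mathfrak{u}\wedge\mathds{1}$ rather than your $(\varphi,0)$ with $(1\wedge|\mathfrak{u}|)\operatorname{sign}\mathfrak{u}$, which is inessential since these truncations agree on nonnegative functions. One correction to your closing aside: the positive minimum principle actually holds for $\mathcal{A}$ (derivatives up to order four of $u\in H^4(\Omega)$ and $u_\Gamma\in H^4(\Gamma)$ vanish a.e.\ on their zero sets, and on $\{u_\Gamma=0\}$ the boundary term reduces to $\tfrac{d}{\kappa}u\ge 0$), so that route could never detect non-positivity and you were right to work with the form domain instead.
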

	\begin{proof}
		$(i)$ A necessary condition for $\left(\mathrm{e}^{t\mathcal{A}}\right)_{t\geq 0}$ to be positive is that $\mathfrak{u}^+ \in D(\mathfrak{a})$ whenever $\mathfrak{u} \in D(\mathfrak{a})$ is a real-valued function; see \cite[Theorem 2.6]{Ou'05}. However, this is not the case. 
		Indeed, we fix $y:=(y_{1},\cdots,y_{N})$ in $\Omega$ and $\varepsilon>0$ such that $\overline{B}(y,2\varepsilon)\subset \Omega$. 
		Let $\vartheta\in D(B(y,2\varepsilon))$ such that $\vartheta=1$ on $B(y,\varepsilon)$ ($B(y,r)$ and $\overline{B}(y,r)$ are respectively the open ball and the closed ball of center $y$ and radius $r$). We set
		$$\psi(x):=(x_{1}-y_{1})\vartheta(x)\qquad \forall x=(x_{1},\cdots, x_{N})\in\Omega.$$
		Clearly, we have $\Psi:=(\psi,0)\in D(\mathfrak{a})$. Moreover, $\Psi^{+}:=(\psi^{+},0)\notin D(\mathfrak{a})$. Indeed,
		$$\psi(x):=x_{1}-y_{1} \qquad \forall x\in B(y,\varepsilon).$$
		Consequently, on $B(y,\varepsilon)$, we obtain
		$$\psi^{+}(x)=(x_{1}-y_{1})\mathds{1}_{B(y,\varepsilon)\cap \{x_{1}>y_{1}\}}$$
		and therefore, 
		$$\partial_{x_{1}}\psi^{+}|_{B(y,\varepsilon)}=\mathds{1}_{B(y,\varepsilon)\cap \{x_{1}>y_{1}\}}\notin H^{1}(B(y,\varepsilon)),$$
		which implies that $\psi^{+}\notin H^{2}(\Omega)$.
		
		$(ii)$ A necessary condition for $\left(\mathrm{e}^{t\mathcal{A}}\right)_{t\geq 0}$ to be $\mathbb{L}^{\infty}$-contractive is that $\mathfrak{u}\wedge \mathds{1} \in D(\mathfrak{a})$ whenever $\mathfrak{u} \in D(\mathfrak{a})$ is a positive valued function; see \cite[Theorem 2.13]{Ou'05}. We can also assume that $\varepsilon\in (0,\frac{1}{2})$ and $0\leq\vartheta\leq 1$. We put $\Phi=(\varphi, 1)$ where
		$$\varphi(x):=1-\psi(x)\qquad \forall x=(x_{1},\cdots, x_{N})\in\Omega.$$
		Note that $\Phi\in D(\mathfrak{a})$ is positive. On $B(y,\varepsilon)$, we obtain 
		$$(\varphi\wedge 1)(x)=1-(x_{1}-y_{1})\mathds{1}_{B(y,\varepsilon)\cap \{y_{1}<x_{1}<y_{1}+\varepsilon\}},$$
		and therefore, 
		$$\partial_{x_{1}}(\varphi\wedge 1)|_{B(y,\varepsilon)}=-\mathds{1}_{B(y,\varepsilon)\cap \{y_{1}<x_{1}<y_{1}+\varepsilon\}}\notin H^{1}(B(y,\varepsilon)),$$
		which implies that $\varphi\wedge 1\notin H^{2}(\Omega)$.
	\end{proof}
	
	We need the following lemma to prove the uniform eventual Markovianity of the semigroup $\left(\mathrm{e}^{t\mathcal{A}}\right)_{t\geq 0}$.
	\begin{lemma}\label{lmdom}
		For all $p\in\mathbb{N}$, we have 
		$$D(\mathcal{A}^{p})\subset \mathcal{H}^{2(p+1)}.$$
		In particular, $D(\mathcal{A}^{\infty})\subset \mathbb{L}^{\infty}.$
	\end{lemma}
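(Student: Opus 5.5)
We argue by induction on $p$, using elliptic regularity in the bulk and on the boundary. For $p=1$ the claim $D(\mathcal{A})\subset\mathcal{H}^4$ holds by the definition of $D(\mathcal{A})$. Assume $D(\mathcal{A}^{p})\subset\mathcal{H}^{2(p+1)}$ and let $Y=(y,y_\Gamma)\in D(\mathcal{A}^{p+1})$. Then $Y\in D(\mathcal{A})$ and $F:=(f,f_\Gamma):=\mathcal{A}Y\in D(\mathcal{A}^{p})\subset\mathcal{H}^{2(p+1)}$. Writing out $\mathcal{A}Y=F$ and using the boundary relations in $D(\mathcal{A})$, we obtain the system
\begin{align*}
-d\Delta^2 y&=f \ \text{ in }\Omega,\qquad \partial_\nu y=0,\qquad \partial_\nu(\Delta y)=\tfrac{1}{\kappa}(y-y_\Gamma)\ \text{ on }\Gamma,\\
\delta\Delta_\Gamma^2 y_\Gamma&=-f_\Gamma+\tfrac{d}{\kappa}(y-y_\Gamma)\ \text{ on }\Gamma .
\end{align*}
The aim is to gain two derivatives, i.e.\ to show $Y\in\mathcal{H}^{2(p+2)}$.

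The proof will be a bootstrap that alternates between the boundary equation and the bulk equation. First, on $\Gamma$ we use the isomorphism \eqref{iso} twice, in the form $(-\Delta_\Gamma+1)^2:H^{k+2}(\Gamma)\to H^{k-2}(\Gamma)$. Since $\delta\Delta_\Gamma^2y_\Gamma=\delta(-\Delta_\Gamma+1)^2y_\Gamma+(\text{terms of order}\le 2)$, we get $y_\Gamma\in H^{k+2}(\Gamma)$ whenever the right-hand side $-f_\Gamma+\frac d\kappa(y-y_\Gamma)$ lies in $H^{k-2}(\Gamma)$.

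Second, in $\Omega$ we set $w:=\Delta y$. It solves the Neumann problem
\[
\Delta w=-f/d,\qquad \partial_\nu w=\tfrac1\kappa(y-y_\Gamma),
\]
and $y$ in turn solves the Neumann problem
\[
\Delta y=w,\qquad \partial_\nu y=0.
\]
Higher-order Neumann regularity (\cite[Proposition 7.5]{Ta'11}, valid up to the order allowed by the smoothness of $\Gamma$) gives $w\in H^{m+2}(\Omega)$ if $f\in H^m(\Omega)$ and $y-y_\Gamma\in H^{m+1/2}(\Gamma)$. It then gives $y\in H^{m+4}(\Omega)$.

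The iteration runs as follows. Starting from $y\in H^{2p+2}(\Omega)$, $y_\Gamma\in H^{2p+2}(\Gamma)$ and $F\in\mathcal{H}^{2p+2}$, the trace of $y$ lies in $H^{2p+3/2}(\Gamma)$. The boundary equation then gives $y_\Gamma\in H^{2p+5}(\Gamma)$, which is more than needed. For the bulk, the boundary datum $y-y_\Gamma\in H^{2p+3/2}(\Gamma)$ and $f\in H^{2p+2}(\Omega)$ give $w\in H^{2p+3}(\Omega)$ (limited by the trace), hence $y\in H^{2p+5}(\Omega)\supset H^{2p+4}$. Thus one pass suffices, and in fact the boundary regularity is limited only by $f_\Gamma$.

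The main obstacle is the smoothness of the boundary. With $\Gamma$ only of class $C^4$, elliptic regularity of arbitrarily high order on $\Gamma$ (coefficients of $\Delta_\Gamma$ are $C^3$) and in $\Omega$ is not available. So either the statement implicitly needs $\Gamma$ smooth for large $p$, which we would assume (as \cite{Ta'11} does), or one must track the admissible orders. I would state the regularity results for $C^\infty$ boundary and remark on the dependence.

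For the final claim, the inclusions give $D(\mathcal{A}^\infty)\subset\bigcap_p\mathcal{H}^{2(p+1)}$. Choose $p$ with $2(p+1)>N/2$. The Sobolev embeddings $H^{2(p+1)}(\Omega)\hookrightarrow L^\infty(\Omega)$ and $H^{2(p+1)}(\Gamma)\hookrightarrow L^\infty(\Gamma)$ (the latter since $\dim\Gamma=N-1$) then give $D(\mathcal{A}^\infty)\subset\mathbb{L}^\infty$. Continuity of this embedding, as needed for Proposition \ref{subM}, follows by the closed graph theorem, or directly because each elliptic step above comes with an a priori estimate.
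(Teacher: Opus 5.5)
Your proposal is correct and follows essentially the same route as the paper. Both argue by induction on $p$, apply the isomorphism \eqref{iso} to $(-\Delta_\Gamma+1)^2$ for $y_\Gamma$ (with lower-order terms moved to the right-hand side), use Neumann elliptic regularity first on $\Delta y$ and then on $y$ with $\partial_\nu y=0$, and finish with Sobolev embedding.

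Your caveat about the boundary is well founded and applies equally to the paper's proof. That proof uses the same higher-order regularity, so it also needs $\Gamma$ smoother than $C^4$ for large $p$; for $N\le 7$, though, $D(\mathcal{A})\subset\mathcal{H}^4\hookrightarrow\mathbb{L}^\infty$ already gives the final claim. Your closed-graph remark also supplies the continuity of the embedding that Proposition \ref{subM} requires, which the paper leaves implicit.
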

	\begin{proof}
		We argue by induction. 
		For $p=1$, the property is trivial. Let $\mathfrak{u}:=(u,u_\Gamma)\in D(\mathcal{A}^{p+1})$. Then $\mathfrak{u}\in D(\mathcal{A}^{p})$ and $(f,f_{\Gamma}):=\mathcal{A}\mathfrak{u}\in D(\mathcal{A}^{p})$. In one hand, we see that $u_{\Gamma}$ solves
		\begin{align*}
			(-\Delta_{\Gamma}+1)^{2}u_{\Gamma}= \delta^{-1}(u-u_{\Gamma}-f_{\Gamma})-2\Delta_{\Gamma}u_{\Gamma}+u_{\Gamma}\in H^{2p}(\Gamma).
		\end{align*}
		Using \eqref{iso}, 
		we obtain
		$u_{\Gamma}\in H^{2(p+2)}(\Gamma)$. On the other hand, $u$ solves the following Poisson’s equation with inhomogeneous
		Neumann boundary condition
		\begin{equation}
			\left\{
			\begin{aligned}
				& \Delta^{2} u +\Delta u=F:=-f+ \Delta u\in H^{2p}(\Omega),
				& & \text {in}\;\Omega, \\
				& \partial_{\nu}(\Delta u)=G:=(u- u_{\Gamma})\in
				H^{2p+1/2}(\Gamma),  & & \text {on}\;\Gamma.
			\end{aligned}
			\right. 
		\end{equation}
		This allows us to apply the elliptic regularity theory (see \cite[Proposition 7.5]{Ta'11}) to deduce that $\Delta u\in H^{2p+2}(\Omega),$
		which is combined with $\partial_{\nu}u=0$ to conclude that $u\in H^{2p+4}(\Omega)$.
        
		For $p$ large enough, the Sobolev embedding entails that $D(\mathcal{A}^{\infty})\subset D(\mathcal{A}^{p})\subset \mathbb{L}^{\infty}.$ See, e.g., \cite[Theorem 4.12]{AF'03} and \cite[Theorem 2.2]{LP'87}.
	\end{proof}
		\begin{remark}
		Since the eigenfunctions $\{\Phi_{n}\}$ of $\mathcal{A}$ belong to $D(\mathcal{A}^{\infty})$, they admit representatives of class $C^{\infty}(\overline{\Omega})$.
	\end{remark}
    As final consequence, we obtain the following Markovianity result.
	\begin{theorem}
		The $C_0$-semigroup $\left(\mathrm{e}^{t\mathcal{A}}\right)_{t\geq 0}$ is uniformly eventually Markovian.
	\end{theorem}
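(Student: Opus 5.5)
The plan is to apply Proposition \ref{subM} on the finite measure space $X=\overline{\Omega}$ with the measure $\mu$, using the identification $\mathbb{L}^2=L^2(\overline{\Omega},\d\mu)$ and $\mathbb{L}^\infty=L^\infty(\overline{\Omega},\d\mu)$ from Section \ref{sec2}. Under this identification, the constant function $1$ corresponds to $\mathds{1}$. Here $\mu(\overline{\Omega})=|\Omega|+|\Gamma|_\Gamma<\infty$, since $\Omega$ is bounded and $\Gamma$ is compact of class $C^4$.

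We take $A:=\mathcal{A}$ and $\mathcal{T}(t):=\mathrm{e}^{t\mathcal{A}}$. The semigroup is real because $\mathfrak{a}$ is a real bilinear form on a real Hilbert space. By Theorem \ref{thm1} the generator is self-adjoint, and by the compactness lemma it has compact resolvent, so the standing assumptions before Proposition \ref{subM} are met.

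Next we check the spectral hypotheses. Corollary \ref{c.spectrum} gives $s(-\mathcal{A})=0$ and $\mathcal{N}(\mathcal{A})=\mathcal{C}=\mathrm{span}\{\mathds{1}\}$, so the eigenspace $E_0$ is one-dimensional and spanned by $1$. The sign convention deserves one line of care. The eigenvalues of $\mathcal{A}$ are $-\lambda_n\le 0$, so $s(\mathcal{A})=0$. The spectral bound in Proposition \ref{subM} should therefore be read with $A$ as the generator, i.e.\ $s(A)=0$, which is exactly $-\lambda_1=0$. I will state this explicitly.

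For the embedding hypothesis, Lemma \ref{lmdom} gives $D(\mathcal{A}^\infty)\subset\mathbb{L}^\infty$. The proposition requires a \emph{continuous} embedding, and this is the step needing an actual argument. I would equip $D(\mathcal{A}^\infty)$ with its natural Fréchet topology, given by the graph norms $\|\mathfrak{u}\|_{D(\mathcal{A}^p)}$. Fix $p$ with $2(p+1)>N/2$, so that $H^{2(p+1)}(\Omega)\hookrightarrow L^\infty(\Omega)$ and $H^{2(p+1)}(\Gamma)\hookrightarrow L^\infty(\Gamma)$ continuously. The inclusion $D(\mathcal{A}^p)\hookrightarrow\mathcal{H}^{2(p+1)}$ from Lemma \ref{lmdom} is continuous by the closed graph theorem: both spaces embed continuously into $\mathbb{L}^2$, so the inclusion is closed. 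Alternatively, the elliptic estimates used in that proof give the bound quantitatively. Composing, we get $\|\mathfrak{u}\|_{\mathbb{L}^\infty}\le C\|\mathfrak{u}\|_{D(\mathcal{A}^p)}$, which is continuity of $D(\mathcal{A}^\infty)\hookrightarrow\mathbb{L}^\infty$.

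Proposition \ref{subM} then yields that the semigroup is uniformly eventually sub-Markovian. It remains to show $\mathrm{e}^{t\mathcal{A}}\mathds{1}=\mathds{1}$. Since $\mathcal{A}\mathds{1}=0$ by Remark \ref{rmk1}, the expansion \eqref{sgdecomp} with $\Phi_1=\mathds{1}/\mu(\overline{\Omega})^{1/2}$ and $\lambda_1=0$ gives $\mathrm{e}^{t\mathcal{A}}\mathds{1}=\mathds{1}$ for all $t\ge0$. Equivalently, $\frac{\d}{\d t}\mathrm{e}^{t\mathcal{A}}\mathds{1}=\mathrm{e}^{t\mathcal{A}}\mathcal{A}\mathds{1}=0$. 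This gives uniform eventual Markovianity.

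The main obstacle is the continuity of the embedding. I will handle it by the closed graph argument above, applied at a single fixed level $p$.
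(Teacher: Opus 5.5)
Your proof is correct and follows the paper's route: Proposition~\ref{subM} applied via Corollary~\ref{c.spectrum} and Lemma~\ref{lmdom}, then $\mathrm{e}^{t\mathcal{A}}\mathds{1}=\mathds{1}$ from $\mathds{1}\in D(\mathcal{A})$ and $\mathcal{A}\mathds{1}=\mathbf{0}$; the paper writes this last step as $\mathrm{e}^{t\mathcal{A}}\mathds{1}-\mathds{1}=\int_0^t \mathrm{e}^{\tau\mathcal{A}}\mathcal{A}\mathds{1}\,\d\tau$, which is your derivative argument in integrated form. Your closed-graph argument for the \emph{continuity} of $D(\mathcal{A}^\infty)\hookrightarrow\mathbb{L}^\infty$, and your reading of the spectral hypothesis as $s(\mathcal{A})=0$ (with the paper's definition of $s$, $s(-\mathcal{A})$ would be $+\infty$), are both correct and make explicit two points the paper leaves implicit.
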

	\begin{proof}
		By Lemma \ref{lmdom} and Proposition \ref{subM}, it follows that $\left(\mathrm{e}^{t\mathcal{A}}\right)_{t\geq 0}$ is uniformly eventually sub-Markovian. To show that $\mathrm{e}^{t\mathcal{A}} \mathds{1}=\mathds{1}$, it suffices to observe that $\mathds{1}\in D(\mathcal{A})$, $\mathcal{A} \mathds{1}=\mathbf{0}$, and use the identity
		$$\mathrm{e}^{t\mathcal{A}} \mathds{1}- \mathds{1}=\int_0^t \mathrm{e}^{\tau\mathcal{A}} \mathcal{A}\mathds{1}\, \d \tau.$$
	\end{proof}

\section{Conclusion}\label{sec7}
In this work, we have studied of the biharmonic heat equation with general dynamic boundary conditions that involve the bi-Laplace-Beltrami operator and a Robin-type bulk-surface coupling. We started with the well-posedness and regularity of the solution associated with our system, by combining semigroup and form methods. For this purpose, we introduced a suitable functional framework that allowed us to prove some qualitative properties of the solutions. In particular, we have proven the generation of an analytic $C_0$-semigroup on the product space $L^2(\Omega)\times L^2(\Gamma)$. Furthermore, we have proven the failure of classical positivity and $L^\infty$-contractivity of the semigroup, whereas we have established its eventual positivity and eventual $L^\infty$-contractivity.

Our results pave the way for the study of further related problems, such as controllability and inverse problems for fourth-order parabolic equations with dynamic boundary conditions. Moreover, the numerical analysis of such systems is non-classical and would be of particular interest for applications. These directions deserve further investigation in view of their significance in the modeling of various applied phenomena.

\end{document}